\newcommand{\mbf}{\mathbf}
\newcommand{\beq}{\begin{equation}}
\newcommand{\eeq}{\end{equation}}
\newcommand{\bea}{\begin{eqnarray}}
\newcommand{\eea}{\end{eqnarray}}
\newcommand{\bit}{\begin{itemize}}
\newcommand{\eit}{\end{itemize}}
\newcommand{\ben}{\begin{enumerate}} 
\newcommand{\een}{\end{enumerate}}
\newcommand{\bpm}{\begin{pmatrix}}
\newcommand{\epm}{\end{pmatrix}}
\newcommand{\bbm}{\begin{bmatrix}}
\newcommand{\ebm}{\end{bmatrix}}
\newtheorem{definition}{Definition}
\newtheorem{prop}{Proposition}
\newtheorem{rem}{Remark}
\DeclareMathAlphabet\mathbfcal{OMS}{cmsy}{b}{n}
\begin{document}

\title{{{\bf Dynamic Factor Models, Cointegration, and Error Correction Mechanisms}}}

\author{{\normalsize Matteo \textsc{Barigozzi}${}^{1}$\hskip 1cm \normalsize Marco \textsc{Lippi}${}^{2}$\hskip 1cm \normalsize Matteo \textsc{Luciani}}${}^{3}$}

\date{\small{\today}}

\maketitle

\vskip-.5cm
\begin{abstract} 
\noindent  The paper studies Non-Stationary Dynamic Factor Models such that the factors $\mbf F_t$ are $I(1)$ and singular, i.e.  $\mathbf F_t$ has dimension $r$ and is driven by a $q$-dimensional white noise, the common shocks,  with $q<r$. We show that 
$\mathbf F_t$ is driven by $r-c$ permanent shocks, where $c$ is the cointegration rank of $\mathbf F_t$, and $q-(r-c)<c$ transitory shocks, thus the same result as in the non-singular case for the permanent shocks but not for the transitory shocks.
Our main result is obtained by combining the classic Granger Representation Theorem  with recent results by Anderson and Deistler on singular stochastic vectors: if $(1-L)\mathbf F_t$ is singular and  has   {\it rational} spectral density then, 
for generic values of the parameters, $\mbf F_t$  has an autoregressive representation with a {\it finite-degree} matrix polynomial fulfilling the restrictions of  a Vector  Error Correction Mechanism  with $c$ error terms.  This result is the basis for consistent estimation of  Non-Stationary Dynamic Factor Models. The relationship between cointegration of the factors and cointegration of the observable variables  is also discussed.\\

\vspace{3mm}

\small \noindent JEL subject classification: C0, C01, E0.\\

\noindent Key words and phrases: Dynamic Factor Models for $I(1)$ variables, Cointegration for singular vectors,  Granger Representation Theorem for singular vectors.
\end{abstract}

\thispagestyle{empty}

 \footnotetext[1]{m.barigozzi@lse.ac.uk -- London School of Economics and Political Science, UK.} 

\footnotetext[2]{ml@lippi.ws -- Einaudi Institute for Economics and Finance, Roma, Italy.}

\footnotetext[3]{matteo.luciani@frb.gov -- Federal Reserve Board of Governors, Washington DC, USA.\\

\noindent  Massimo Franchi, and Rocco Mosconi read previous versions of the paper and gave suggestions for 
improvements. We also thank the  participants to the Workshop on Estimation and Inference Theory for Cointegrated Processes in the State Space Representation,   Technische Universit\" at Dortmund, January 2016.
Of course  we are responsible for any remaining errors. 
The  paper  was  written  while  Matteo  Luciani  was \textit{charg\'e de recherches} F.R.S.-F.N.R.S., whose financial support he   gratefully acknowledges. 
The views expressed here are those of the authors and do not necessarily reflect those of the Board of Governors or the Federal Reserve System.
 }

\newpage

\section{Introduction}\label{introduction}

In the last fifteen years Large-Dimensional Dynamic Factor Models  (DFM) have become increasingly popular in the economic and the econometric  literature and they are nowadays commonly used by policy institutions. The success of these models have fostered a large effort by the academic community in studying the theoretical properties of these models. Despite few exceptions \citep{baing04,bai04,penaponcela04}, the literature on DFMs has studied only the case in which the data are stationary. In this paper, we move away from the stationary setting and we provide representation results, which constitute the basis upon which we can build a non-stationary DFM, whose estimation is studied in a companion paper \citep{BLLirf}.

Factor models are based on the idea that  all the variables in an economic system are driven  by  a few common (macroeconomic) shocks, their residual dynamics being explained by idiosyncratic components  which may result from 
  measurement errors and sectoral or regional shocks. Formally, each variable in the $n$-dimensional  dataset $x_{it},\ i=1,2,\ldots,n$ can be decomposed into the sum of a common component $\chi_{it}$,  and an idiosyncratic component $\epsilon_{it}$: $x_{it} =\chi_{it}+ \epsilon_{it}$  \citep{FHLR00,fornilippi01,stockwatson02JASA,stockwatson02JBES}.  In the standard version of the DFM, which is adopted here,  the  
 common components are   linear combinations of an $r$-dimensional vector of  common factors $\mathbf F_t=(F_{1t}\ F_{2t}\ \cdots\ F_{rt})'$,  
\begin{equation}\label{pennetta}\chi_{it}=\lambda_{i1}F_{1t}+\lambda_{i2}F_{2t}+\cdots + \lambda_{ir}F_{rt}=\bm\lambda_i \mbf F_t.\end{equation}
The vector $\mathbf F_t$ is dynamically  driven by the $q$-dimensional  non-singular white-noise vector\footnote{Usually orthonormality is assumed. This is convenient but  not necessary in the present paper.} $\mathbf u_t=(u_{1t}\ u_{2t}\ \cdots\ u_{qt})'$,  the common shocks: 
\begin{equation}\label{vinci}\mbf F_t=\mbf U(L) \mbf u_t,\end{equation} 
where $\mathbf U(L)$ is an $r\times q$ matrix, \citep{stockwatson05,baing07,FGLR09}.    
The dimension $n$ of the dataset is assumed to be large as compared to $r$ and $q$, which  are independent of $n$, with $q\leq r$.   More precisely,  all assumptions and results are formulated assuming  that both $T$, the number of observations for each $x_{it}$, and  $n$, the number of variables,  tend to infinity.

The assumption  that  the vector $\mathbf F_t$ is singular, i.e. $r>q$, has received  sound  empirical support in a number of papers analysing macroeconomic databases, see, for example,
\cite{GRS04}, \cite{amengualwatson07}, \cite{fornigambettiJME}, and \cite{smokinggun} for the US and \cite{BCL} for the Euro area.
Such results  can be easily understood  observing that the static equation
  \eqref{pennetta}  is  just  a convenient representation  derived from a ``primitive'' set of  dynamic equations linking the common components $\chi_{it}$ to the common shocks $\mathbf u_t$.   As a simple example,  suppose that the variables $x_{it}$ belong to a macroeconomic dataset and are driven by
  a common  one-dimensional cyclical  process  $ f_t$, such that $(1-\alpha L) f_t = u_t$, where $u_t$ is scalar white noise, and that   the variables $x_{it}$ load 
  $f_t$ dynamically: 
  \begin{equation}\label{vwilliams}x_{it} = a_{i0} f_t + a_{i1} f_{t-1}+\epsilon_{it}.\end{equation}
 In this case representation \eqref{pennetta} is obtained by setting $F_{1t}=f_t$, $F_{2t}=f_{t-1}$, $\lambda_{i1} = a_{i0}$, $\lambda _{i2} = a_{i1}$, while 
  equation  \eqref{vinci} takes the form
  $$ \begin{pmatrix} F_{1t}\\ F_{2t} \end{pmatrix} = \begin{pmatrix} (1-\alpha L)^{-1} \\ (1-\alpha L)^{-1} L \end{pmatrix} u_t,$$
  so that $r=2$,  $q=1$ and 
 the  dynamic equation \eqref{vwilliams} is replaced by the static representation $x_{it} = \lambda_{i1} F_{1t}+\lambda_{i2} F_{2t} +\epsilon _{it}$. For a general analysis of the relationship between representation \eqref{pennetta} and  ``deeper''  dynamic representations like \eqref{vwilliams}, see
    e.g.   \cite{stockwatson05}, \cite{FGLR09}, see also Section \ref{tristru} below.  Singularity of $\mathbf F_t$, i.e. $r>q$, will be assumed throughout the present paper.

If the factors $\mbf F_t$ and the idiosyncratic terms are stationary, and hence the data $x_{it}$  are stationary as well, the factors  $\mathbf F_t$ and the loadings  $\pmb \lambda_i$  can be consistently  estimated using the first $r$ principal components \citep{stockwatson02JASA,stockwatson02JBES}. The common shocks $\mbf u_t$ and the  function 
 $\mbf U(L)$  can  then be  estimated  by a   singular   VAR for  $\mbf F_t$.  Lastly,
 identification restrictions can be applied to the shocks $\mathbf u_t$ and the function $\mathbf U(L)$ to obtain structural common shocks and impulse response functions, see \cite{stockwatson05}, \cite{FGLR09}.
 
When the factors are $I(1)$, so that the observables variables $x_{it}$ are $I(1)$ as well, equation \eqref{pennetta} remains unchanged while \eqref{vinci} is replaced by
  \begin{equation}\label{poverahalep} (1-L) \mathbf F_t = \mathbf U(L) \mathbf u_t.\end{equation} 
If the idiosyncratic components are stationary the factors $\mbf F_t$ are directly estimated as the principal components of the $I(1)$ variables $x_{it}$ \citep{bai04,penaponcela04}. However, if the idiosyncratic components are non-stationary the principal components of the stationary series $(1 - L)x_{it}$ provide an estimate of the differenced factors $(1 - L)\mbf F_t$ and of the loadings $\bm\lambda_i$, and the factors $\mbf F_t$ can then be recovered by integration, see e.g. \citet{baing04}.
    
The main difference with respect to the stationary case arises with the  estimation of $\mathbf u_t$ and $\mathbf U(L)$,  or  structural common shocks and impulse-response functions.  This requires the estimation of an autoregressive model for the $I(1)$  factors $\mathbf F_t$, which, due to singularity of  $\mathbf F_t$,  are quite obviously cointegrated (the  spectral density of $(1-L)\mathbf F_t$ is singular at all frequencies and therefore at frequency zero).   Here we study  the autoregressive representations  of thesingular cointegrated vector $\mathbf F_t$, while estimation  is studied in  \cite{BLLirf}.\footnote{To our knowledge, the present paper is the first  to  study cointegration and Error Correction representations    for the   singular factors of 
$I(1)$  Dynamic Factor Models.  An Error Correction model in the DFM framework  is studied in \cite{schifez,ancorapiuschifez}. However, their 
focus  is on the relationship between the observable variables and the factors. Their Error Correction term is a linear combination  of  the variables $x_{it}$ and the factors $\mathbf F_t$, which is stationary if the idiosyncratic components are stationary (so that the $x$'s and the factors are cointegrated).  Because of this and other important differences their results  are not directly comparable to those in the present paper.}

The paper is organized as follows. In Section \ref{sop} we recall recent  results for singular stochastic vectors 
with rational spectral density, see \citet{andersondeistler08,andersondeistler0free}, and we
  discuss  cointegration and  the  cointegration rank for $I(1)$ singular stochastic vectors: $c$, the cointegration  rank, is equal to 
  $r-q$,    the minimum due to singularity, plus $d$, with $0\leq d<q$.
  
   In Section \ref{hauptbanhof}  we obtain the permanent-transitory shock representation in the singular case:   $\mathbf F_t$ is driven by 
  $r-c=q-d$ permanent and $d=c-(r-q)$ transitory shocks,
  the same result as in the non-singular case for the 
  permanent shocks but not for the transitory.  Then we prove our main results. Assuming rational spectral density for the 
  vector $(1-L) \mathbf F_t$ and therefore that  the entries of $\mathbf U(L)$ in \eqref{poverahalep} are rational fuctions of $L$, then
{\it for generic values of the parameters  of the matrix $\mathbf U(L)$},   $\mathbf F_t$  has an autoregressive  representation  fulfilling the restrictions of a  Vector Error Correction Mechanism (VECM) with $c$ error terms:
\begin{equation}\label{poverawilliams}\mathbf A(L) \mathbf F_t= \mathbf A^*(L) (1-L) \mathbf F_t + \pmb \alpha \pmb \beta' \mathbf F_{t-1} = \mathbf h +\mathbf R \mathbf u_t,\end{equation}
where $\pmb \alpha $ and $\pmb \beta $ are both $r\times c$ and full rank,  $\mathbf R$ is $r\times q$, $\mathbf A(L)$ and  $\mathbf A^*(L)$ are {\it finite-degree} matrix polynomials. These results are obtained by combining the Granger Representation Theorem \citep{englegranger1987} with  Anderson and Deistler's results.  
 
Section \ref{hauptbanhof} also contains  an  exercise carried on  with simulated  singular $I(1)$ vectors. We compare the results obtained by estimating an unrestricted VAR in the  levels and  a VECM. Though limited to a simple example, the results   confirm  what has been found for non-singular vectors, that is under cointegration the long-run features of impulse-response  functions are better estimated using a VECM rather than an unrestricted VAR in the levels \citep{phillips98}.

In Section \ref{cointegrationDFM} we analyse  cointegration of the observable variables $x_{it}$. Our results on cointegration of the factors $\mathbf F_t$ have the obvious implication that {$p$-dimensional} subvectors of the $n$-dimensional  common-component vector  $\pmb \chi_t$, with $p>q-d$, are cointegrated.  Stationarity of the idiosyncratic components would imply that  all $p$-dimensional subvectors of the $n$-dimensional dataset $\mathbf x_t$ are cointegrated. For example, if $q=3$ and $d=1$, then  all
$3$-dimensional subvectors in the dataset are cointegrated,   a kind of regularity that we do not  observe in actual large macroeconomic datasets. This suggests that an estimation strategy robust to the assumption that the idiosyncratic components are $I(1)$ (some of the variables $\epsilon _{it}$ are $I(1)$) has to be preferred \citep[for this aspect we refer to][]{BLLirf}. Section \ref{conclusion}  concludes.  Some long  proofs, a discussion of some non-uniqueness problems arising with singularity and details on the simulations are collected in  the Appendix.

\section{Stationary and non-stationary Dynamic Factor  Models } \label{sop}
\subsection{The  Factors and Idiosyncratic Components are Stationary}\label{sop1}Consider the Dynamic Factor Model
\begin{equation} \label{unouno} 
\mbf x_t=\bm \chi_t +\bm\epsilon_t,\ \ \ \ \bm\chi_t = \bm \Lambda \mbf F_t,
\end{equation}
where: (1)~the observables $\mbf x_t$, the common components $\bm \chi_t$, and the idiosyncratic components $\bm \epsilon_t$ are $n$-dimensional vectors, (2)~$\mbf F_t$ is an $r$-dimensional vector of common factors, with $r$ independent of $n$, (3)~$\bm \Lambda$ is an $n\times r$ matrix, (4)~$\mbf F_t$ is driven by a $q$-dimensional zero-mean white noise vector process $\mbf u_t$, the common shocks, with $q  < r$, (5)~$\epsilon_{it}$ and $u_{js}$ are orthogonal for all $i=1,2,\ldots,n$, $j=1,2,\ldots,q$, $t,s\in \mathbb Z$.  Other assumptions concerning  the asymptotic  properties of the model, for $n\to\infty$, will not be used here and are therefore not 
reported (see the literature mentioned in the Introduction).    The results in the present and the next section, though stated for 
the  vector of the  factors $\mathbf F_t$ and the common shocks $\mathbf u_t$,  hold for any 
singular stochastic vector  under the assumptions specified below.

As observed in the Introduction, with some exceptions, the theory of model \eqref{unouno} has been developed under the assumption that $\mbf x_t$, $\bm \chi_t$, $\bm \epsilon_t$ and $\mbf F_t$ are stationary.  In addition to \eqref{unouno}, it is often assumed that $\mbf F_t$ has a reduced-rank VAR representation:
\beq\label{modena} 
\mbf A(L) \mbf F_t= \mbf R \mbf u_t,
\eeq
where $\mbf A(L)$ is a finite-degree $r\times r$ matrix polynomial and $\mbf R$ is $r\times q$. Moreover, it is well known that  $\bm \Lambda$ and $\mbf F_t$ can be estimated by principal components, while an estimate of $\mbf A(L)$, $\mbf R$ and $\mbf u_t$ can be obtained by standard techniques. Inversion of $\mbf A(L)$ provides an estimate of the  impulse-response functions of the observables to the common shocks:
$$\mbf x_t = \bm \Lambda\mbf A(L) ^{-1} \mbf R \mbf u_t +\bm \epsilon_t.$$
Structural shocks and structural impulse-response functions can then be obtained, respectively, as $\mbf w_t = \mbf Q \mbf u_t$ and $ \bm \Lambda\mbf A(L) ^{-1} \mbf R \mbf Q^{-1}$, where the $q\times q$ matrix $\mbf Q$ is determined in the same way as in Structural VARs  \citep{stockwatson05,FGLR09}.

The VAR representation \eqref{modena} has a standard motivation as an approximation to an infinite autoregression with exponentially declining coefficients. However, as stated above, $\mbf F_t$ has reduced rank. Under reduced rank and rational spectral density for $\mbf F_t$, \cite{andersondeistler08,andersondeistler0free} prove that {\it generically} $\mbf F_t$ has a finite-degree autoregressive representation, so that no approximation argument is needed to motivate (\ref{modena}). A formal statement of this result requires the following definitions.

\begin{definition}\label{def1} \textbf{(Rational reduced-rank family)}
Assume that $r>q>0$ and let $\cal G$ be a set of ordered couples $(\mbf S(L),\mbf C(L))$, where:
\begin{compactenum}[(i)]
	\item $\mbf C(L)$ is an $r\times q$ polynomial matrix of degree $s_1\geq 0$.
	\item $\mbf S(L)$ is an $r\times r$ polynomial matrix of degree $s_2\geq 0$. $\mbf S(0) =\mbf I_r$.
	\item Denoting by $\mathbf p$  the vector containing the $\lambda = rq(s_1+1)+r^2s_2$ coefficients of the entries of $\mbf C(L)$ and $\mbf S(L)$, we assume that $\mathbf p\in \Pi$, where $\Pi$ is an open subset of $\mathbb R^\lambda $, and that for $\mathbf p\in \Pi$, if $\det (\mathbf S(z))=0$, then $|z|>1$.
\end{compactenum}

\noindent We say that the family of weakly stationary stochastic processes 
\begin{equation}\label{kamikaze}\mbf F_t =\mbf S(L)^{-1}\mbf C(L) \mbf u_t,\end{equation}
where $\mbf u_t$ is a $q$-dimensional white noise with non-singular variance-covariance matrix and
$(\mbf S(L),\mbf C(L))$ belongs to $\cal G$, is a \it rational reduced-rank family. 
\end{definition}
The notation  $\mathbf F^p_t$, $\mathbf C^p(L)$, etc.,  though more rigorous, would  be heavy and not really necessary. We use it only once in the proof of Proposition \ref{montidc}.

Note that (\ref{kamikaze}) is the unique stationary solution of the ARMA equation 
\begin{equation} \mbf S(L) \mbf F_t = \mbf C(L) \mbf u_t.\label{sushi}\end{equation}

\begin{definition}\label{defff2}\textbf{(Genericity)} 
Suppose that a statement $ Q (\mathbf p)$ depends on $\mathbf p\in {\cal A}$, where $\cal A$ is an open subset of $\mathbb R^\lambda$. Then $Q(\mathbf p)$ holds generically in $\cal A$ if the subset $\cal N$ of $\cal A$ where it does not hold is nowhere dense in $\cal A$, i.e. the closure of $\cal N$ has no internal points.
\end{definition}

\begin{prop}\label{propp1} \textbf{(Anderson and Deistler)} (I)~Suppose that $\mbf V(L)$ is an $r\times q$  matrix whose entries are rational functions of $L$, with $r>q$. If $\mbf V(L)$ is zeroless, i.e. has rank $q$ for all complex numbers $z$, then $\mbf V(L)$ has a finite-degree  stable left inverse, i.e. there exists a finite-degree polynomial $r\times r$ matrix $\mbf W(L)$, such that (a)~$\det (\mbf W(z))=0$ implies $|z|>1$, (b)~$\mbf W(L) \mbf V(L) = \mbf V(0)$. (II)~Let $\mbf F_t = \mbf S(L) ^{-1} \mbf C(L) \mbf u_t$ be a rational reduced-rank family with parameter set $\Pi$. For generic values of the parameters  $\mathbf p\in \Pi$, $\mathbf S(z)^{-1}\mbf C(z)$ is zeroless.  In particular, generically $\mathbf S(1)^{-1} \mathbf C(1)$  and $\mathbf S(0)^{-1} \mathbf C(0)=\mathbf C(0)$ have full rank $q$.
\end{prop}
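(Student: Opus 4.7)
The proposition splits naturally into a structural part (I), which produces a specific polynomial left-multiplier with a stable determinant from the zerolessness hypothesis, and a genericity part (II), which asserts that zerolessness is the typical situation in the parameter space $\Pi$. My strategy is to prove (I) by a Smith--McMillan reduction followed by a polynomial completion, and (II) by identifying failure of zerolessness with the vanishing of a polynomial in $\mathbf p$.

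For Part (I), I would start with the Smith--McMillan factorisation
\[
\mathbf V(L) \;=\; \mathbf U_1(L)\, \begin{pmatrix} \mathrm{diag}\!\bigl(\epsilon_i(L)/\psi_i(L)\bigr) \\ 0 \end{pmatrix}\, \mathbf U_2(L),
\]
with $\mathbf U_1$ of size $r\times r$ and $\mathbf U_2$ of size $q\times q$, both unimodular polynomial. Zerolessness forces each numerator $\epsilon_i$ to be a nonzero constant, which one absorbs. Splitting $\mathbf U_1(L)^{-1}$ in blocks $\bigl(\mathbf M_1(L);\,\mathbf M_2(L)\bigr)$ of sizes $q \times r$ and $(r-q) \times r$, a direct computation gives $\mathbf M_2(L)\mathbf V(L)=0$, while $\tilde{\mathbf W}(L) := \mathbf U_2(L)^{-1}\,\mathrm{diag}(\psi_i(L))\,\mathbf M_1(L)$ is a polynomial left inverse of $\mathbf V(L)$. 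I then set
\[
\mathbf W(L) \;=\; \mathbf V(0)\,\tilde{\mathbf W}(L) \;+\; \mathbf H(L)\,\mathbf M_2(L),
\]
so that condition (b) is automatic. The $r\times(r-q)$ polynomial freedom in $\mathbf H(L)$ is used to place the zeros of $\det\mathbf W$: a zero of $\det \mathbf W$ at some $z_0$ with $|z_0|\leq 1$ corresponds to a nontrivial vector in the left kernel of $\mathbf W(z_0)$, and a polynomial perturbation of $\mathbf H$ supported transversally to the row span of $\mathbf V(0)\tilde{\mathbf W}$ removes it, yielding a stable $\mathbf W$.

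For Part (II), $\mathbf S(z)^{-1}\mathbf C(z)$ has rank strictly less than $q$ at some $z_0$ iff all $q\times q$ minors of the polynomial matrix $\mathrm{adj}(\mathbf S(z))\,\mathbf C(z)$ vanish simultaneously at $z_0$; equivalently, the $\binom{r}{q}$ polynomials given by these minors share a common root. The existence of such a common root is encoded by the simultaneous vanishing of a collection of Sylvester-type resultants, each a polynomial in the coefficient vector $\mathbf p$. The set $\mathcal N\subset\Pi$ where zerolessness fails is therefore contained in the zero locus of a polynomial in $\mathbf p$, hence is a proper algebraic subvariety of $\Pi$ as soon as one exhibits a single point of $\Pi$ where zerolessness holds. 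A convenient witness is $\mathbf S(L)\equiv \mathbf I_r$ together with $\mathbf C(L)\equiv \mathbf C_0$, for any constant $\mathbf C_0$ of full column rank $q$: then $\mathbf S(L)^{-1}\mathbf C(L)=\mathbf C_0$ has rank $q$ at every $z$. The complement of a proper algebraic subvariety of the open set $\Pi$ is nowhere dense, proving the genericity claim. The two ``in particular'' assertions follow by specialising zerolessness at $z=0$ (using $\mathbf S(0)=\mathbf I_r$) and at $z=1$.

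The main obstacle lies in the stability clause of Part (I): the Smith--McMillan reduction only delivers a polynomial left action, whereas the unit-disc containment of the zeros of $\det\mathbf W$ is a genuinely analytic requirement. The cleanest way to guarantee it is to combine the algebraic reduction above with a stable spectral factorisation step, or alternatively to invoke the Anderson--Deistler pole-placement construction in the Kronecker canonical form, which shows that the polynomial degrees of freedom in $\mathbf H(L)$ suffice to push every zero of $\det\mathbf W$ strictly outside the closed unit disc.
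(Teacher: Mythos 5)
First, a point of reference: the paper does not actually prove this proposition. Statement (I) is quoted from Anderson and Deistler (their Theorem~3) and statement (II) is attributed to Forni, Hallin, Lippi and Zaffaroni (2014); the only genericity argument the paper carries out in full is the proof of the analogous Proposition~\ref{montidc} in \ref{B}, which uses precisely the resultant-plus-witness-point machinery you propose for Part~(II). So your attempt should be judged against that appendix, and against completeness in its own right.

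On Part (I): the Smith--McMillan reduction and the ansatz $\mathbf W(L)=\mathbf V(0)\tilde{\mathbf W}(L)+\mathbf H(L)\mathbf M_2(L)$ are the right skeleton, but you explicitly leave the stability clause unproven and defer it to ``a stable spectral factorisation step'' or to the Anderson--Deistler pole-placement construction. Since stability is the entire content of the theorem, Part (I) as written is incomplete. It can, however, be closed inside your own framework without any perturbation or transversality argument: take $\mathbf H(L)=\mathbf H_0$ constant with $\bigl(\mathbf V(0)\ \ \mathbf H_0\bigr)$ invertible (possible because $\mathbf V(0)$ has rank $q$); since $\bigl(\tilde{\mathbf W}(L)';\ \mathbf M_2(L)'\bigr)'$ equals $\mathrm{diag}\bigl(\mathbf U_2(L)^{-1}\mathrm{diag}(\psi_i(L)),\,\mathbf I_{r-q}\bigr)\,\mathbf U_1(L)^{-1}$, one gets $\det \mathbf W(L)=\det\bigl(\mathbf V(0)\ \ \mathbf H_0\bigr)\cdot\kappa\cdot\prod_i\psi_i(L)$ with $\kappa$ a nonzero constant, and stability follows from the absence of poles of $\mathbf V(L)$ in the closed unit disc.

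On Part (II) there is a genuine logical gap. Your argument is: the failure set $\mathcal N$ is contained in the zero locus $Z(P)$ of a resultant-type polynomial $P(\mathbf p)$, hence exhibiting one point of $\Pi$ where zerolessness holds makes $\mathcal N$ a proper subvariety. But what you need is $P\not\equiv 0$, and because the inclusion $\mathcal N\subseteq Z(P)$ is in general strict, a point outside $\mathcal N$ need not lie outside $Z(P)$. Concretely, the Sylvester resultant of two $q\times q$ minors vanishes whenever both formal leading coefficients vanish, irrespective of whether the minors share a root. Your witness $\mathbf S(L)\equiv\mathbf I_r$, $\mathbf C(L)\equiv\mathbf C_0$ makes the minors constant, so (for any positive degrees $s_1,s_2$) it annihilates the leading coefficients and hence $P$ itself, even though the matrix is zeroless there; it therefore proves nothing about $P$. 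What is required is a witness at which the leading coefficients of two selected $q\times q$ minors \emph{and} their resultant are simultaneously nonzero --- exactly the delicate construction the paper performs in \ref{B} (Remarks~\ref{rem1}--\ref{rem3} and the bidiagonal matrices $\underline{\mathbf M}_1(z)$, $\underline{\mathbf M}_2(z)$, whose determinants are arranged to have disjoint parameters, nonvanishing leading coefficients and no common roots). Without such a witness your Part (II) does not go through as stated.
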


For statement (I)  see \cite{andersondeistlersingular}, Theorem 3.   Statement (II) is a modified version of their Theorem 2. They obtain genericity with respect  to the parameters of the state-space representation  of \eqref{sushi}, whereas  in statement (II) we refer to the original parameters 
 of the matrix polynomials in (\ref{sushi}) (see   \cite{FHLZ2014} for a proof). Another 
 version of Anderson and Deistler's  Theorem  2 is Proposition \ref{montidc} in the present paper. 
  Both statement (I) and our Proposition \ref{montidc} are crucial for the proof of our main results in Propositions \ref{grtrr} and \ref{montidc230}.

\subsection{Non-Stationary factors}\label{sop2} 
Suppose now that  the vector $\mathbf F_t$ is non-stationary while $(1-L) \mathbf F_t$ is stationary. The common practice in this case   consists in reducing   the data $x_{it}$  to stationarity by taking first differences and estimating the differenced  factors $(1-L) \mathbf F_t$  by means of the principal components of the variables $(1-L) x_{it}$.  Then usually   impulse-response functions are obtained by estimating a VAR for $(1-L) \mathbf F_t$. Of course this implies  possible misspecification if $\mathbf F_t$ is cointegrated, which    is always the case when $\mathbf F_t$ is singular.   

To analyse  cointegration and the autoregressive representations of the singular non-statio\-na\-ry  vector   $\mathbf F_t$ 
let us firstly recall the definitions of $I(0)$, $I(1)$ and cointegrated vectors.  

In the present paper we only consider stochastic vectors that are either weakly stationary  with a rational spectral density matrix or 
such that their first difference is  weakly stationary with rational spectral density.  
 Assume that the $n$-dimensional  vector $\mathbf y_t$ is weakly stationary with rational spectral density, denoted by $\pmb \Sigma_{ y}(\theta)$.    The matrix $\pmb \Sigma_{ y}(\theta)$
 has constant rank $\rho$, with $\rho\leq n$, i.e. has the same rank $\rho$ for $\theta$ almost everywhere in $ [-\pi,\ \pi]  $.  
 We say that $\rho$ is the rank of $ \mathbf y_t$.
  Moreover, 
 $ \mathbf y_t$ has  moving average representations
 \begin{equation}\label{timedomain}\mathbf y_t =\mathbf V(L) \mathbf v_t,\end{equation}
where $\mathbf v_t$ is a non singular $\rho$-dimensional white noise,  $\mathbf V(L)$  is an $n\times \rho$ matrix whose entries are rational functions of $L$ with no poles of modulus less or equal to unity. 
If  ${\rm rank}(\mathbf V(z))=\rho $ for  $|z|<1$,  then $\mathbf v_t$ belongs to the space spanned by 
$\mathbf y_\tau$, with $\tau \leq t$, and representation \eqref{timedomain}, as well as $\mathbf v_t$, is called {\it fundamental} (see \cite{rozanov}, pp. 43--7)\footnote{When $n=p$, the condition that ${\rm rank}(\mathbf V(z))=p $ for  $|z|<1$ becomes 
$\det(\mathbf V(z))\neq 0$ for $|z|<1$.}.

Let $\mathbf z_t $ be an $r$-dimensional  weakly stationary with rational spectral density and assume that $\mathbf z_t\in L_2(\Omega,{\cal F},P)$.   Consider the difference equation
\begin{equation}\label{familyday} (1-L)\pmb \zeta_t =\mathbf  z_t,\end{equation}
in the unknown process $\pmb\zeta_t$. A solution of \eqref{familyday} is 
\begin{equation}\label{martinlutero}\mathbf {\tilde y}_t =\begin{cases} \hbox{$\mathbf z_1+\mathbf z_2+\cdots +\mathbf z_t$, for $t>0$}\\ \hbox{$\mathbf 0$, for $t=0$}\\ \hbox{$-(\mathbf z_0+\mathbf z_{-1}\cdots +\mathbf z_{t+1})$, for $t>0$}.\\ \end{cases} \end{equation}
All the solutions of \eqref{familyday}  are $\mathbf y_t=\mathbf {\tilde y}_t +\mathbf W$, where $\mathbf W$ is any $r$-dimensional  stochastic vector belonging to $L_2(\Omega,{\cal F},P)$, that is the particolar solution $\mathbf {\tilde y}_t$ plus any solution of $(1-L)\pmb \zeta_t =\mathbf 0$.

 \begin{definition} \label{iuno}  \textbf {(I(0), I(1) and cointegrated vectors)}

\noindent  \textbf {I(0).} The $n$-dimensional vector stochastic process  $\mathbf y_t$ is  $I(0)$ if it is weakly  stationary with  rational spectral density
 $\pmb \Sigma_y(\theta)$ and  $\pmb \Sigma_y(0)\neq \mathbf 0$. 
 
\noindent \textbf{Integrated process of order 1, I(1).} The $n$-dimensional vector  stochastic  process  $\mathbf y_t$ is  $I(1)$ if  there exists an $n$-dimensional process $\mathbf z_t$, weakly stationary with rational spectral density, such that $\mathbf y_t$ is a solution of  the equation $(1-L)\pmb \zeta _t=\mathbf z_t$.   The rank of $\mathbf y_t$ is defined as the rank of $\mathbf z_t$.

\noindent \textbf{Cointegration.}    Assume that the $n$-dimensional stochastic vector $\mathbf y_t$ is $I(1)$ and denote by
$\pmb \Sigma_{\Delta y}(\theta)$ the spectral density of $(1-L)\mathbf y_t$. The   vector  $\mathbf y_t$  is cointegrated with cointegration rank $c$,  with $0<c<n$, if  {\rm rank}$(\pmb\Sigma_{\Delta y}(0))=n-c$.  If $\rho$ is the rank of $\mathbf y_t$,  then $c\geq n-\rho$.

%
 \end{definition}
 
 \goodbreak
 Some  comments are in order.
  
  \begin{enumerate}
 \item[C1.]  If $\mathbf y_t$ has  representation \eqref{timedomain},  
 because $\pmb \Sigma_{ y}(0) = (2\pi)^{-1} \mathbf V(1) \pmb \Gamma_v\mathbf V(1)'$,  where $\pmb \Gamma _v$ is the covariance matrix of $\mathbf v_t$, $\mathbf y_t$ is $I(0)$ if and only if $\mathbf V(1) \neq \mathbf 0$. 
 
\item [C2.] Under the parameterization in Definition \ref{def1},  for generic values of the parameters in $\Pi$, we have  ${\rm rank}\left (\mathbf S(1)^{-1}\mathbf C(1)  \right )=q$ (see proposition \ref{propp1}(II)), so that  
$${\rm rank}\left (\pmb \Sigma_F(0)\right ) = {\rm rank}\left ((2\pi)^{-1} \mathbf S(1)^{-1} \mathbf C(1) \pmb \Gamma_ u \mathbf C(1)' [\mathbf S(1)']^{-1}\right )=q, $$
where $\pmb \Gamma_u$ is the covariance matrix of $\mathbf u_t$. Thus, generically,  $\mathbf F_t$   has rank $q$ and is $I(0)$.


\item[C3.] Assume that $\mathbf y_t$ is such that $(1-L)\mathbf y_t$ is weakly stationary with rational spectral density 
and that 
\begin{equation}\label{dayfamily}(1-L) \mathbf y_t = \mathbf V(L) \mathbf v_t\end{equation}
is  one of its moving average representations.  The process 
$\mathbf y_t$ is $I(1)$ if and only if $\mathbf V(1)\neq \mathbf 0$.    Thus our definitions of $I(0)$ and $I(1) $ processes
are equivalent  to Definitions 3.2, and 3.3    in \cite{Johansen95}, p.~35, with two minor differences: our assumption of rational spectral density and  the time span of stochastic processes, $t\in \mathbb Z$ in the present paper, $t=0,1,\ldots $ in Johansen's book.
%

\item[C4.]  Note that $\mathbf y_t$ can be $I(1)$ 
even though some  of its 
 coordinate processes  are $I(0)$. 

\item[C5.]  
 If $\mathbf y_t$ is $I(1)$ and cointegrated with cointegration rank $c$, there exist 
 $c$  linearly independent $n\times 1$ vectors $\mathbf c_j$, $j=1,\ldots ,c$, such that the spectral density of  $\mathbf c_j '(1-L) \mathbf  y _t$ vanishes at frequency zero.  The vectors $\mathbf c_j$ are called cointegration vectors.    Of course 
 a set of cointegration vectors $\mathbf c_j$, $j=1,\ldots,  c$, can be replaced by the set $\mathbf d_j$, $j=1,\ldots,c$, where 
 the vectors $\mathbf d_j$ are $c$  independent linear combinations  of the vectors $\mathbf c_j$. 
 
 \item[C6.]      In the literature on integrated and cointegrated  vectors,   the expression  ``$I(1)$ process'' is often ambiguous,
 sometimes  it refers to  a specific process $\mathbf y_t$ which solves $(1-L)\pmb\zeta_t=\mathbf z_t$, sometimes to  the whole class  $\mathbf y_t=\mathbf {\tilde y}_t +\mathbf W$.    This minor abuse of language is   very convenient 
 and usually  does not  cause misunderstandings, see comment C7 below, Section \ref{ECM}, Proposition \ref{grtrr} in particular.

 \item[C7.]  If $ \mathbf y_t$ is $I(1)$, cointegrated   and has representation \eqref{dayfamily},
the cointegration rank of $\mathbf y_t$ is  $c$ if and only if the rank of $\mathbf V(1)$ is $n-c$. Moreover $\mathbf c$ is a cointegration 
vector for $\mathbf y_t$  if and only if $\mathbf c'\mathbf V(1)=\mathbf 0$.   In \ref{A} we show that  if  $\mathbf c$ is a cointegration vector for $\mathbf y_t$, then  $\mathbf y_t$ can be determined (that is, a member of the class  containing $\mathbf y_t$ can be determined)  such that $\mathbf c' \mathbf y_t$ is weakly stationary with rational spectral density.   Thus our definition  of cointegration is equivalent to  that in
\cite{Johansen95}, p.~37.

\item[C8.] Let $\mathbf y_t$ be $n$-dimensional, $I(1)$ with rank $\rho<n$. The spectral density $\pmb \Sigma_{\Delta y}(\theta)$ has rank $\rho$  almost everywhere in $[-\pi,\ \pi]$, so that there exist  vectors $\mathbf d_j(e^{-i\theta})$, $j=1,2,\ldots,n-\rho$, such that $\mathbf d_j(e^{-i\theta}) \pmb \Sigma _{\Delta y}(\theta) =\mathbf 0$ for all $\theta$.  In the time domain,
$ \mathbf d_j(L) (1-L) \mathbf y_t=0$.     Such  exact linear dynamic relationships between the coordinates of $\mathbf y_t$  (not possible  when $\rho=n$)
 are closely linked to the non-uniqueness of autoregressive representations   for singular vectors  (see below in this section and  \ref{nu}).

\item[C9.]  Definition \ref{iuno}, Cointegration, does not rule out that an eigenvector of  $\pmb \Sigma _{\Delta y}(\theta)$  constant, i.e. does not depend on $\theta$.  If $\mathbf d$ is  such an eigenvector, $\mathbf d(1-L) \mathbf y_t=0$, which is a
degenerate case of cointegration (again, not possible when $\rho=n$). More on this in comment  C3 on Definition \ref{def2}.


\end{enumerate}

%

Now, suppose that $\mbf F_t$  is $I(1)$,
 that 
 $$ (1-L) \mbf F_t = \mathbf S(L)^{-1}\mbf C(L) \mbf u_t=\mathbf U(L) \mathbf u_t$$
and that the rank of the $r\times q$ matrix $\mathbf U(z)$ is $q$ for almost all $z\in \mathbb C$, i.e.  that the rank of $\mathbf F_t$ is $q$.  Then:

\begin{enumerate}[(i)]
	\item   Obviously, as already observed in the definition of cointegration,  $\mathbf F_t$ has at least $r-q$ cointegration vectors: $c\geq r-q$. 
	\item   If we assume that the couple $(\mathbf S(L),\mathbf C(L)$ is parameterized as in Definition \ref{def1}, then,
by  Proposition \ref{propp1}(II),  we can  argue that 
	 generically $\mathbf S(z)^{-1}\mbf C(z)$ has full  rank $q$ for all $z$ and therefore the cointegration rank of $\mbf F_t$ is generically $r-q$. However, the rank of $\mathbf S(z)^{-1}\mbf C(z)$ at $z=1$ has a special interpretation
	 as  the number of  long-run equilibrium relationships between the processes $F_{ft}$.  Such number usually has a theoretical or behavioral motivation, so that it cannot be modified by any genericity argument.   As a consequence  we 
adopt a different parameterization  for  families of $I(1)$ vectors  in which the cointegration rank $c$   is  fixed, with $r>c\geq r-q$, see Definition \ref{def2}.
	\item If the family has  $c=r-q$, then generically  rank$(\mathbf S(1)^{-1}\mbf C(1))=q$ and   Proposition \ref{propp1}(I) can be applied. In spite of cointegration, generically $(1-L)\mbf F_t$ has a finite-degree autoregressive representation
\beq\label{compagnoalfano} \mbf A(L) (1-L)\mbf F_t = \mbf C(0) \mbf u_t.\eeq
	\item If  the family has $c>r-q$, then no autoregressive representation exists for $(1-L)\mbf F_t$, finite or infinite. However, we prove that, if $c$ is equal or greater than $r-q$, generically $\mbf F_t$ has a  representation as a VECM.
\beq\label{compagnocicchitto}
\mbf A(L) \mbf F_t=\mbf A^* (L) (1-L) \mbf F_t +\mbf A(1) \mbf F_{t-1} =\mbf h +\mbf C(0) \mbf u_t,
\eeq
where the rank of $\mbf A(1)$ is $c$, and $\mbf A(L)$ and $\mbf A^*(L)$ are \it finite-degree\rm\ matrix polynomials. 
	\item  In the singular case the  autoregressive representation of $\mathbf F_t$ is in general  not  unique (as it is the case with full-rank vectors). For example, when $c=r-q$, (\ref{compagnoalfano}) and (\ref{compagnocicchitto}) are two different  autoregressive representations for $\mbf F_t$, the first with no error terms, the second with $r-q$ error terms.  However, as we show in  \ref{nu},
	different   autoregressive representations of $\mathbf F_t$ produce the same impulse-response functions.    Proposition \ref{grtrr} in the next section proves the existence 
	of representation \eqref{compagnocicchitto}, which has the maximum number of  error terms.
\end{enumerate}

The existence of representation (\ref{compagnocicchitto}) for reduced-rank $I(1)$ vectors, where $\mbf A^*(L)$ is of finite degree, is our main result. Its proof combines the Granger Representation Theorem  with  the results summarized in Proposition \ref{propp1}.

\subsection{``Trivial'' and ``primitive'' cointegration vectors  of $\mathbf F_t$}\label{tristru}
Denote by $d$ the number of cointegrating vectors exceeding the minimum $r-q$, so that $c=r-q+d$. Of course $q>d\geq 0$.
 There is an interesting class of vectors  $\mathbf F_t$ for which 
it is possible  to distinguish between  $r-q$ cointegrating vectors that  merely arise in the construction of $\mathbf F_t$, 
and $d$ additional cointegrating  vectors   with a possible   structural interpretation.

 Let $\mathbf f_t$ be a non-singular $q$-dimensional vector of   primitive factors  $\mathbf f_t$ with representation 
 \begin{equation}\label{primitivo} (1-L)\mathbf f_t= \mathbf U_f (L) \mathbf u_t,\end{equation}
 and assume that the variables $x_{it}$ load $\mathbf f_t$ and its lags up to the $p$-th:
 \begin{equation}\label{primitivo1} x_{it} = \mathbf a_{i0} \mathbf f_t +\mathbf a_{i1}
\mathbf f_{t-1} +\cdots + \mathbf  a_{ip} \mathbf f_{t-p}+\epsilon _{it}.\end{equation}
This is a generalization of
 the  example used in the Introduction to motivate the singularity of the vector $\mathbf F_t$, see equation \eqref{vwilliams}.
Model \eqref{primitivo}--\eqref{primitivo1}  is transformed into the standard  form \eqref{pennetta} by introducing the 
$r$ dimensional vector
$$\mathbf F_t =\begin{pmatrix} \mathbf f_t'& \mathbf f_{t-1}'&\cdots & \mathbf f_{t-p}'\end{pmatrix}',$$
where $r=q(p+1)$.  We have 
\begin{equation}\label{bellamattina}\mathbf x_t = \pmb \Lambda \mathbf F_t +\pmb \epsilon_t,\ \ \ \  \pmb \Lambda=\begin{pmatrix}  \mathbf a_{10} & \mathbf a_{11} & \cdots & \mathbf a_{1p}\\     \mathbf a_{20} & \mathbf a_{21} & \cdots & \mathbf a_{2p} \\ \vdots \\  \mathbf a_{n0} & \mathbf a_{n1} & \cdots & \mathbf a_{np} \end{pmatrix}
, \ \ \ \ (1-L) \mathbf  F_t = \begin{pmatrix} \mathbf U_f(L) \\ L \mathbf U_f(L) \\ \vdots \\L^p \mathbf U_f (L)\\ \end{pmatrix} \mathbf u_t = \mathbf U(L) \mathbf u_t.
\end{equation}
It is immediately seen that 
$\mathbf F_t$ has $r-q=qp$    cointegrationg vectors 
$$\mathbf t^{h,k}=\left ( t^{h,k}_1\  t^{h,k}_2\ \cdots\  t_r^{h,k}\right ),$$ 
$h=1,\ldots,q$, $k=1,\ldots,p$, where
$$  t^{h,k}_j =\begin{cases} \hbox{$1$ if $j=h$}\\ \hbox{$-1$ if $j=kq+h$}\\ \hbox{$0$ otherwise.}\end{cases} $$
These  can be called  trivial cointegration vectors, as they merely result from  the  construction of $\mathbf F_t$ by  stacking the 
vectors $\mathbf f_{t-k}$, $k=0,\ldots,p,$.  

On the other hand, if $\mathbf f_t$ is cointegrated,
with cointegration rank $c_f$, $q>c_f> 0$, and cointegrating vectors $\mathbf s^m$, $m=1,\ldots,c_f$, 
then the cointegration rank of $\mathbf F_t$ is $c=r-q+c_f$  with the additional $c_f$ cointegrating vectors 
obtained by augmenting each $\mathbf s^m$ with $qp$ zeros.  Thus cointegration of the  primitive
factors $\mathbf f_t$   naturally translates into cointegration of  $\mathbf F_t$.

Two observations are in order.  Firstly,
 the above distinction between primitive and trivial cointegration  has heuristic 
 interest but is limited to representation \eqref{bellamattina}, the latter being 
 only one among infinitely  many equivalent standard  representations  of
\eqref{primitivo}--\eqref{primitivo1}. 
If $\mathbf H$ is an $r\times r$ invertible matrix, 
$$ \mathbf x_t =\pmb {\ \Lambda }^*\mathbf { F}^*_t +\pmb  \epsilon_t,\ \ \ \ (1-L) \mathbf { F}^*_t = \mathbf U^*(L) \mathbf u_t,$$
where $\mathbf F^*_t=\mathbf H^{-1} \mathbf F_t$,  $\pmb { \Lambda}^* = \pmb \Lambda\mathbf H $,  $\mathbf { U} ^*(L) = \mathbf H^{-1} \mathbf U(L)$,  
is another factor representation for the variables $x_{it}$.    The cointegrating vectors 
of  $\mathbf F^*_t$   are linear combinations of the vectors $\mathbf t^{h,k}$ and $\mathbf s^m$, so that primitive and trivial cointegration get mixed.  In particular, if the model is
estimated by principal components of the variables $x_{it}$, in general  the estimated factors $\mathbf {\hat F}_t$ approximate 
the space spanned by $\mathbf  F_t$, not $\mathbf F_t$ itself,  so that no distinction between trivial and primitive cointegrating vectors of $\mathbf {\hat F}_t$ is possible.

Secondly, as the elementary example below shows, not all vectors $\mathbf F_t$ can be put in the form \eqref{primitivo}--\eqref{primitivo1}. Let  $r=2$, $q=1$ and
$$ \mathbf U^*(L) = \begin{pmatrix} 1+L  \\ L^2 \\ \end{pmatrix}.$$
Suppose that there exists an invertible matrix 
$$ \mathbf H = \begin{pmatrix} \alpha & \beta \\ \gamma & \delta\\ \end{pmatrix},$$
such that  
$$ \mathbf U(L) = \mathbf H \mathbf U^*(L) = 
\begin{pmatrix} \alpha + \alpha L + \beta L^2\\ \gamma +\gamma L +\delta L^2\end{pmatrix}$$
has the form in \eqref{bellamattina}, third equation.  The second row of $\mathbf U(L)$ would equal the first multiplied by  $L$:
$$    \gamma +\gamma L +\delta L^2 =  (     \alpha + \alpha L + \beta L^2)L= \alpha L + \alpha L^2 +\beta L^3.$$ 
This implies that $\gamma=\beta=\alpha=\delta=0$. Thus no representation  \eqref{primitivo}--\eqref{primitivo1} exists for $\mathbf U^*(L)$.

\section{Representation theory for reduced rank I(1) vectors}\label{hauptbanhof}

\subsection{Families of cointegrated vectors}\label{crrv}

Consider the equation
\begin{equation}\label{wold1} (1-L) \pmb \zeta_t =\mbf S(L) ^{-1} \mbf C(L) \mbf u_t= \mbf U(L) \mbf u_t,\end{equation}
where $\det \mathbf S(L)$ has no roots of modulus less or equal to unity and $\mathbf C(1)\neq \mathbf 0$, so that $\mathbf U(1)\neq \mathbf 0$.
Suppose  that $u_{jt} \in L_2(\Omega,{\cal F},P)$,  for $j=1,\ldots,q$, where $(\Omega,{\cal F},P)$ is a probability space.     It is easily seen that all the solutions of (\ref{wold1}) are the processes 
\begin{equation}\label{volkswagen}\mbf F_t=\mbf {\tilde F}_t + \mbf W,\ \ \ t\in  \mathbb Z,\end{equation}
where $\mbf W$ is an $r$-dimensional  stochastic vector with   $W_{k}\in L_2(\Omega,{\cal F},P)$, $f=1,\ldots,r$,  and
\begin{equation}\label{fiat}\mbf{\tilde F}_t  =  \mathbf U(L) \pmb\nu_t=\mathbf S(L) ^{-1}\mathbf C(L) \pmb\nu_t, \ \ \ \  \hbox{where} \ \ \ \ 
\pmb \nu_t =\begin{cases} \hbox{$\mbf u_1+\mathbf u_2+ \cdots +  \mbf u_t$,  for  $t>0$}\\  \hbox{$\mathbf 0$, for $t=0$}\\
\hbox{$-( \mbf u_0+\mathbf u_{-1}+\cdots + \mbf u _{t+1})$, for $t<0$.}\\
\end{cases}
\end{equation}
Because $\mathbf { F} _t$ is a solution of \eqref{wold1},  $(1-L)\mathbf F_t=\mathbf U(L) \mathbf u_t$ is stationary with rational spectral density. Moreover,
as we assume $\mathbf C(1)\neq \mathbf 0$, so that   $\mbf U(1)=\mathbf S(1)^{-1}\mathbf C(1)\neq \mathbf 0$,    then $\mathbf F_t$  is $I(1)$. 

Now, because $\mbf S(1)^{-1}$ is   a non-singular $r\times r$ matrix, the cointegration rank of  $\mbf F_t$ only depends on the rank  of $\mbf C(1)$. Precisely, if $c$ is  the cointegration  rank of $\mbf F_t$, then   $c=r -{\rm rank}(\mathbf C(1))$,  so that
$r>c\geq r-q$.   Moreover, there exist  an $r\times (r-c)$ matrix   $\bm \xi $ and  a $q\times (r-c)$ matrix $\bm\eta$,  both of  full rank $r-c\leq q$, such that
\begin{equation}\label{casiniboia}\mbf C(1) =\bm \xi \bm \eta ',\end{equation}
 see \citet[p. 97, Proposition 3]{LT}.   The matrix  $\mbf C(L)$ has the  (finite) Taylor expansion
$$ \mbf C(L) =  \mbf  C(1)- (1-L)\mbf C'(1)+ \frac{1}{2}(1-L)^2\mbf C''(1)   - \cdots$$
Gathering all terms after the second and using (\ref{casiniboia}),
\begin{equation} \mbf C(L) = \bm \xi\bm \eta' - (1-L)\mbf C'(1)  + (1-L)^2 \mbf C_1(L),\label{monticioccolataio}\end{equation}
where $\mbf C_1(L)$ is a polynomial matrix.   

Representation (\ref{monticioccolataio}) can be used for a very convenient parameterization of $\mbf C(L)$.    

\begin{definition}\label{def2} \textbf{(Rational reduced-rank I(1) family with cointegration rank c)}
Assume that $r>q>0$, $r>c\geq r-q$ and let $\cal G$ be a set of  couples $(\mbf S(L),\mbf C(L))$, where: 

\begin{compactenum}[(i)]
	\item The matrix $\mbf C(L)$ has the  parameterization
\begin{equation}\label{montiperacottaro} 
\mbf C(L) = \bm \xi\bm \eta ' + (1-L) \mbf D+ (1-L)^2 \mbf E(L),\end{equation}
where $\bm \xi$ and $\bm \eta$ are $r\times (r-c)$ and $q\times (r-c)$ respectively,  $\mbf D$ is an $r\times q$ matrix and $\mbf E(L)$ is an $r\times q$ matrix polynomial of degree $s_1\geq 0$.
	\item $\mbf S(L) $ is an $r\times r$ polynomial matrix of degree $s_2\geq 0$. $\mbf S(0)=\mbf I_r$.
	\item  Denoting by  $\mathbf p$   the vector containing the  $\lambda =  (r-c) (r+q)+ rq(1+s_1)+r^2s_2$   coefficients of the matrices   in (\ref{montiperacottaro}) and in $\mbf S(L)$, we assume that $\mathbf p\in \Pi$, where  $\Pi$ is   an open subset of $\mathbb R^\lambda  $,   and that
for $\mathbf p\in \Pi$, if  $\det (\mbf S(z))=0$ then $|z|>1$.
\end{compactenum}
We say that the family of     processes $\mbf F_t$, such that  $(1-L)\mbf F_t =\mbf S(L)^{-1}\mbf C(L) \mbf u_t$,      where $\mbf u_t$ is a $q$-dimensional non-singular white noise and  $(\mbf S(L),\mbf C(L))$ belongs to  $\cal G$, is a \it rational reduced-rank $I(1)$ family with cointegration rank $c$.
\end{definition}

Three comments are in order.

\begin{enumerate}

\item[C1.]    Generically   the matrices $\pmb \xi$ and $\pmb \eta$  have full rank $r-c$, so that 
$\mathbf F_t$ is $I(1)$ with cointegration rank $c$.



\item[C2.]    In Remark \ref{bastapercarita}  in \ref{B},  we show that generically $\mathbf F_t$ has rank $q$.

\item[C3.]     Suppose that $r=4$, $q=1$, $s_1=s_2=0$.   For all $\mathbf p\in \Pi$, there exists a vector  $\mathbf d$  orthogonal to the $4$-dimensional columns 
$\pmb \xi\pmb\eta'$, $\mathbf D$, $\mathbf E_0$. Thus  $\mathbf d'\mathbf C(L)=\mathbf 0$ and therefore $\mathbf d(1-L) \mathbf F_t=0$, the degenerate case of cointegration 
mentioned in comment C9 on Definition \ref{iuno}.  However, if $s_2>0$ or $s_1$ is big enough as compared to $r$, degenerate contegration can be ruled out generically.

\end{enumerate}

Denoting by $\bm \xi_\perp$ an $r\times c$ matrix whose columns are linearly   independent  and orthogonal to all  columns of $\bm \xi$, the columns of $\bm \xi_\perp$  and $\pmb{\check \xi}_\perp=\mathbf S'(1)\pmb \xi_\perp $ are  full sets of independent cointegrating vectors for  $\mbf S(L)\mbf F_t$ and $\mathbf F_t$ respectively.

%
\subsection{Permanent and transitory shocks}\label{ctf}
Let $\mbf F_t$ be a rational reduced-rank $I(1)$ family with cointegration rank $c=r-q+d$, $q>d\geq 0$.
  Let $\bm \eta_\perp$ be a $q\times d$ matrix  whose columns are  independent and orthogonal to the columns of $\bm \eta$, and let 
$$\overline{\bm \eta}=\bm \eta( \bm \eta' \bm \eta)^{-1},\ \ \ \overline{\bm \eta}_\perp=\bm \eta_\perp(\bm \eta_\perp'\bm \eta_\perp)^{-1}.$$ 
Defining  $\mbf v_{1t}=  \bm \eta_\perp'  \mbf u_t$, and $\mbf v_{2t}=  \bm \eta'  \mbf u_t$,   we have 
$$\mathbf u_t=\overline{\bm \eta}_\perp \mathbf v_{1t} + \overline{\bm \eta}\mathbf v_{2t}= \begin{pmatrix} \overline{\bm \eta}_\perp & \overline{\bm \eta}\end{pmatrix}
\begin{pmatrix} \mathbf v_{1t}\\ \mathbf v_{2t}\\ \end{pmatrix}$$
We have 
\beq\label{sexyboschi}\mathbf C(L) \mathbf u_t=
\left [\mbf C(L)\left ( \overline{\bm \eta}_\perp \ \overline{\bm\eta}\right )\right ]  \begin{pmatrix} \mathbf v_{1t}\\ \mathbf v_{2t}\\ \end{pmatrix}=
(1-L) \mbf G_1(L) \mbf v_{1t}+ 
\left (  \bm \xi +(1-L) \mbf G_2(L) \right ) \mbf v_{2t}.\eeq
where 
$\mbf G_1(L) = \left ( \mbf D + (1-L) \mbf E(L)\right ) \overline{\bm \eta}_\perp$, and 
$\mbf G_2(L) = \left ( \mbf D + (1-L) \mbf E(L)\right ) \overline{\bm\eta}$. 
 Using \eqref{sexyboschi}, we can write all the solutions of the difference equation $(1-L) \mbf F_t=\mbf S(L) ^{-1} \mbf C(L) \mbf u_t$ as 
\beq\label{PTdecomposition}
\mbf F_t = \mbf  S(L)^{-1} \left[\mbf G_1(L)\mbf v_{1t} + 
 \mbf G_2(L)\mbf v_{2t} +  \mbf T_t \right]+\mbf W ,
 \eeq
 where $\mbf W\in  L_2(\Omega,{\cal F},P)$, and 
 $$\mbf T_t=\begin{cases}
 \hbox{$\bm{\xi}(\mbf  v_{21} +\mbf v_{22} +\cdots + \mbf v_{2t})$, for $t>0$}\\ \hbox{$\mathbf 0$, for $t=0$}\\
 \hbox{$- \bm {\xi} (\mbf v_{20}+\mbf v_{2,-1}+\cdots + \mbf v_{2, t+1})$, for $t<0$.}\\ \end{cases}
   $$
As $\bm\xi$ is full rank, we see that $\mbf F_t$ is driven by  the $q-d=r-c$ permanent shocks $\mbf v_{2t}$, and by the $d$ temporary shocks  $\mbf v_{1t}$. In representation \eqref{PTdecomposition}, the component  
$\mbf T_t$ is the common-trend of \citet{stockwatson88JASA}. Note that the number of permanent shocks is obtained as $r$ minus the cointegration rank, as usual. However, the number of transitory shocks is obtained as the complement of the number of  permanent shocks  to $q$, not to $r$, as though $r-q$ transitory shocks had a zero coefficient.

\subsection{Error Correction  representations}\label{ECM}
We now prove  the Granger Representation Theorem for  singular stochastic vectors
 $\mbf F_t$  belonging to   a rational reduced-rank $I(1)$ family with cointegration rank $c$ and parameters in the open set $\Pi\in  \mathbb R^\lambda$.   Our line of reasoning combines     arguments used in the  proof  of Granger's Theorem in the non-singular case (see e.g. 
  \cite{Johansen95}, Theorem 4.5, p. 55-57)  and the results in Proposition \ref{propp1}  on singular stochastic vectors.

   From 
  Definition \ref{def2}, $\mathbf F_t$ is a solution of the equation
$$ \mathbf S(L)   (1-L) \pmb \zeta_t =\mathbf C(L) \mathbf u_t= \left ( \pmb \xi \pmb \eta ' +(1-L) \mathbf D+(1-L)^2 \mathbf E(L) \right ) \mathbf u_t,$$
and has the representation $\mathbf F_t =\mathbf {\tilde F}_t+\mathbf W$, where 
$\mathbf {\tilde F}_t$ is defined in \eqref{fiat} and $\mathbf W$ is an $r$-dimensional stochastic vector.

Generically, the matrix  $\bm \zeta=\begin{pmatrix} \bm \xi_\perp'\\ \bm \xi'\end{pmatrix}$  is $r\times r$ and invertible (see comment C1 on Definition \ref{def2}). We have
\begin{equation}\label{compagnalanzillotta}\begin{aligned}
(1-L) \bm \zeta\mbf S(L) \mbf F_t&= \pmb\zeta \mathbf C(L) \mathbf u_t=
\left \{ \begin{pmatrix} \mbf 0 _{c\times q}\\ \bm \xi ' \bm \xi \bm \eta'\end{pmatrix} + (1-L)
\begin{pmatrix} \bm \xi_\perp ' \mbf D\\ \bm \xi ' \mbf D\\ 
  \end{pmatrix}+ (1-L)^2\begin{pmatrix} \bm \xi_\perp ' \mbf E(L)\\ \bm \xi ' \mbf  E(L)\\ 
  \end{pmatrix} \right \} \mbf u_t\\ &=
\begin{pmatrix} (1-L)\mbf I_c & \mbf 0\\ \mbf 0 &\mbf I_{r-c}\\ \end{pmatrix}   \left \{ \begin{pmatrix} \bm \xi_\perp ' \mbf D\\ \bm \xi ' \bm \xi \bm \eta'\end{pmatrix} + (1-L)
\begin{pmatrix}  \bm \xi_\perp' \mbf E(L) \\
\bm \xi ' \mbf D\\ 
  \end{pmatrix}+ (1-L)^2\begin{pmatrix}\mbf 0 _{c\times q}  \\ \bm \xi ' \mbf  E(L)\\ 
  \end{pmatrix} \right \} \mbf u_t  .
\end{aligned}\end{equation}
Taking the first $c$ rows,
\begin{equation}\label{oddiosanto} (1-L) \bm \xi '_\perp \mbf S(L)\mbf F_t = 
(1-L) \left ( \bm \xi_\perp ' \mbf D + (1-L) \bm \xi'_\perp \mbf E(L)\right )\mbf u_t.\end{equation}
In  \ref{A} we prove that if  $\mathbf F_t$ is such that 
\eqref{oddiosanto} holds and $ \bm \xi '_\perp \mbf S(L)\mbf F_t$ is weakly stationary with rational spectral density, then, in   $\mathbf F_t= \mathbf {\tilde F}_t+\mathbf W$, $\mathbf W$  must be chosen such that
%
\begin{equation}\label{oddiosanto1}\bm \xi '_\perp\mbf S(L) \mbf F_t  =\mbf k +\left ( \bm \xi_\perp ' \mbf D + (1-L) \bm \xi'_\perp \mbf E(L)\right )\mbf u_t,\end{equation}
where $\mbf k$ is a $c$-dimensional  constant vector and $\pmb \xi_\perp ' \mathbf S(1) \mathbf W=\mathbf k$ . 
Now,
\begin{equation}\label{oddiosanto4} \bm \xi_\perp ' \mbf S(1) \mbf F_t = 
\bm \xi_\perp ' \mbf S(L) \mbf F_t  - \bm \xi _\perp '  \mbf  S^*(L) (1-L) \mbf F_t,\end{equation}where $\mathbf S^*(L)$  is  the polynomial  $(\mathbf S(L) -   \mathbf S(1))/(1-L) $. Obviously  $\pmb \xi'_\perp \mathbf  S^*(L) (1-L) \mbf F_t$ is weakly stationary with rational spectral density.
As a consequence, $\bm \xi_\perp ' \mbf S(L) \mbf F_t $ is weakly stationary with rational spectral density if and only if $\bm \xi_\perp ' \mbf S(1) \mbf F_t $   
 is weakly stationary with rational spectral density.   Moreover,
   equation   \eqref{oddiosanto4},   replacing $\pmb\xi_\perp'\mathbf S(L) \mathbf F_t$ with the left-hand side of \eqref{oddiosanto1} and $(1-L) \mathbf F_t$ with 
 $ \mathbf S(L) ^{-1}(\pmb \xi\pmb \eta' + (1-L)\mathbf D+(1-L^2\mathbf E(L) ) \mathbf u_t$, becomes
 $$ \bm \xi_\perp ' \mbf S(1) \mbf F_t = \mathbf k+ \left \{ \left (\pmb \xi'_\perp \mathbf D -\pmb \xi'_\perp\mathbf S^*(1)\mathbf S(1)^{-1} \pmb \xi \pmb \eta'\right )+ (1-L) \pmb{\cal M} (L)\right \}\mathbf u_t.$$
As  $\pmb \xi'_\perp \mathbf D -\pmb \xi'_\perp\mathbf S^*(1)\mathbf S(1)^{-1} \pmb \xi \pmb \eta'\neq \mathbf 0$ generically,   $\bm \xi_\perp ' \mbf S(1) \mbf F_t$ is generically $I(0)$.

In conclusion,  assuming that $\mathbf F_t$ is such that $\pmb \xi'_\perp \mathbf S(L) \mathbf F_t$ is weakly stationary with rational spectral density and mean $\mathbf k$:
$$
\begin{aligned}\begin{pmatrix} \mbf I_c  & \mbf 0\\ \mbf 0 & (1-L) \mbf I_{r-c}\end{pmatrix} &\bm \zeta\mbf S(L) \mbf F_t = \\ &
\begin{pmatrix}\mbf k \\ \mbf 0 _{(r-c)\times 1} \end{pmatrix} + 
  \left \{ \begin{pmatrix} \bm \xi_\perp ' \mbf D\\ \bm \xi ' \bm \xi \bm \eta'\end{pmatrix} + (1-L)
\begin{pmatrix}  \bm \xi_\perp' \mbf E(L) \\
\bm \xi ' \mbf D\\ 
  \end{pmatrix}+ (1-L)^2\begin{pmatrix}\mbf 0 _{c\times q}  \\ \bm \xi ' \mbf  E(L)\\ 
  \end{pmatrix} \right \} \mbf u_t  .\end{aligned}$$
  Denote by $\mbf M(L)$ the matrix between curly brackets. 
  The following statement is proved in \ref{B}.

\begin{prop}\label{montidc}  Assume that  the family of I(1) processes $\mbf F_t$, such that $(1-L)\mbf F_t=\mbf S(L)^{-1}\mbf C(L) \mbf u_t$, is a rational reduced-rank I(1)  family with cointegration rank $c$ and parameter set $\Pi$.  Then, for generic values of the parameters in $\Pi$,   the $r\times q$ matrix $\mbf M(z)$   is zeroless. In particular, generically, 
\begin{equation}\label{juveassassina}{\rm rank}\left (\mbf M(1)\right ) ={\rm rank}\left (   \begin{pmatrix} \bm \xi'_\perp \mbf D \\ \bm \xi' \bm \xi \bm \eta' \end{pmatrix}\right )=q.\end{equation}\end{prop}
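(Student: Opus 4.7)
My plan is to split the zerolessness of $\mbf M(z)$ into a direct rank calculation at $z=1$ and a resultant/single-example genericity argument for $z\neq 1$, leveraging the algebraic identity derived just above the statement,
$$\pmb\zeta\,\mbf C(L) \;=\; \bpm (1-L)\mbf I_c & \mbf 0\\ \mbf 0 & \mbf I_{r-c}\epm\mbf M(L).$$
For every $z\neq 1$ both $\pmb\zeta$ and the diagonal prefactor are invertible, so $\mathrm{rank}(\mbf M(z))=\mathrm{rank}(\mbf C(z))$; only the point $z=1$, where the prefactor itself drops rank, needs to be handled separately.

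At $z=1$ one has $\mbf M(1)=\bpm \pmb\xi_\perp'\mbf D\\ \pmb\xi'\pmb\xi\pmb\eta'\epm$. Generically $\pmb\xi$ and $\pmb\eta$ have full column rank $r-c$, hence $\pmb\xi'\pmb\xi$ is invertible and the row space of the bottom block equals $\mathrm{span}(\pmb\eta')\subset\mathbb R^q$. Letting $\pmb\eta_\perp\in\mathbb R^{q\times d}$ be a full-rank matrix orthogonal to $\pmb\eta$ (with $d=q-(r-c)=c-(r-q)\geq 0$), the matrix $\mbf M(1)$ attains rank $q$ if and only if $\pmb\xi_\perp'\mbf D\,\pmb\eta_\perp$ has rank $d$. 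Since $\mbf D$ is a free $r\times q$ parameter and the linear map $\mbf D\mapsto\pmb\xi_\perp'\mbf D\,\pmb\eta_\perp$ is surjective onto $\mathbb R^{c\times d}$ (using $c\geq d$), the rank-$d$ condition holds on an open dense subset of the $\mbf D$-parameters; this simultaneously establishes the displayed identity \eqref{juveassassina}.

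For $z\neq 1$ it remains to show that on an open dense subset of $\Pi$ every $q\times q$ minor $m_j(z;\mathbf p)$ of $\mbf C(z;\mathbf p)$, after dividing out its forced common factor $(1-z)^{d}$ produced by the rank drop at $z=1$, has no further common root with the others. Each reduced minor is a polynomial in $z$ with coefficients polynomial in $\mathbf p$, so the common-root condition on any pair of them is the vanishing of their resultant, a polynomial in $\mathbf p$. Consequently, the bad locus in $\Pi$ is algebraic, and it suffices to exhibit a single $\mathbf p_0\in\Pi$ at which two reduced minors are coprime in $\mathbb C[z]$; the complement of the resulting resultant's zero set is then open dense, which combined with (a) gives the full result.

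The main obstacle is the construction of $\mathbf p_0$. I would take $\mbf S(L)=\mbf I_r$, choose $\pmb\xi,\pmb\eta$ with orthonormal columns, pick $\mbf D$ so that $\pmb\xi_\perp'\mbf D\,\pmb\eta_\perp$ has full rank $d$ as in step (a), and take $\mbf E(L)$ of minimal nontrivial degree; a direct calculation---in the spirit of the illustrative case $r=2$, $q=1$, $c=1$ where $\mbf M(L)=(d_2+(1-L)e_2,\;1+(1-L)d_1+(1-L)^2e_1)^\top$ is manifestly zeroless for, e.g., $(d_1,d_2,e_1,e_2)=(0,1,0,1)$---verifies the required coprimality. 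An alternative route is to apply Proposition~\ref{propp1}(II) to the auxiliary rational reduced-rank family $\mathbf Y_t=\mbf M(L)\mbf u_t$ (with trivial autoregressive part), reducing the problem to showing that the polynomial map from the Definition~\ref{def2} parameters onto the coefficients of $\mbf M(L)$ is dominant onto the space of matrix polynomials with the prescribed structural degree pattern; this relocates the technical burden into a Jacobian-rank check that exploits the linear entry of $\mbf D$ into each coefficient block of $\mbf M$.
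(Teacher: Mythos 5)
Your overall strategy is the same as the paper's: reduce zerolessness to a polynomial condition on $\mathbf p$ (via resultants of two $q\times q$ minors), then invoke the ``false at one point $\Rightarrow$ generically false'' principle, so that everything hinges on exhibiting a single anchor point $\mathbf p_0\in\Pi$ (or in $\mathbb R^\lambda$) where two minors are coprime with nonvanishing leading coefficients. Your treatment of $z=1$ is correct and clean --- the equivalence of ${\rm rank}(\mbf M(1))=q$ with ${\rm rank}(\pmb\xi_\perp'\mbf D\pmb\eta_\perp)=d$ and the surjectivity of $\mbf D\mapsto \pmb\xi_\perp'\mbf D\pmb\eta_\perp$ do establish \eqref{juveassassina} generically, and this mirrors the paper's remark relating \eqref{juveassassina} to Johansen's full-rank condition on $\bm\xi_\perp'\mbf C^*\bm\eta_\perp$.

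The genuine gap is that the anchor point is never actually constructed for general $(r,q,c,s_1,s_2)$. You explicitly flag this as ``the main obstacle'' and then offer (i) an illustrative case $r=2$, $q=1$, $c=1$, which is degenerate (there $d=0$, the minors are $1\times1$, and the whole $z=1$ subtlety is absent), and (ii) an alternative reduction to a ``Jacobian-rank check'' on the parameter map, which is a relocation of the burden rather than an argument. But this construction is precisely the nontrivial content of the paper's proof: after reducing w.l.o.g.\ to $r=q+1$, the paper specifies a structured subfamily $\underline{\mbf M}(z)$ (with $\mbf S(L)=\mbf I_r$, canonical $\underline{\bm\xi},\underline{\bm\eta},\underline{\bm\xi}_\perp,\underline{\mbf D}$ and a bidiagonal $\underline{\mbf E}(z)$) so that the two minors $\underline{\mbf M}_1(z)$, $\underline{\mbf M}_2(z)$ are triangular, their determinants factor into explicit univariate polynomials, and --- crucially --- the two determinants share \emph{no parameters}, which is what lets one choose the parameters so that the leading coefficients and the resultant are simultaneously nonzero. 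Without such a construction the proof is not complete. Two further, more minor points: (a) you work with minors of $\mbf C(z)$ and divide out $(1-z)^d$, but different minors carry different powers of $(1-z)$ depending on how many of the first $c$ rows of $\bm\zeta\mbf C(z)$ they use; working directly with $\mbf M(z)$, as the paper does, avoids this bookkeeping. (b) A resultant detects common roots only when the leading coefficients of both polynomials are nonzero, so the anchor point must also be chosen to keep both leading coefficients (which are themselves polynomials in $\mathbf p$) away from zero; you do not address this, while the paper handles it explicitly in its Remarks 2 and in steps (i)--(iii) of the construction.
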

For $r=q$,  \eqref{juveassassina}  is equivalent to
the    condition that  $\bm \xi_\perp' \mbf C^* \bm \eta_\perp$ has full rank in \cite{Johansen95}, Theorem 5.4, p. 55  (Johansen's  matrix $\mathbf C^* $ is equal to our $\mathbf D$).  
To see this,  observe that if $r=q$ then $\begin{pmatrix} \pmb   \eta_\perp &\pmb \eta\end{pmatrix}$ is $r\times r$  and invertible, and that
$$\mathbf M(1)=\begin{pmatrix} \pmb   \eta_\perp &\pmb \eta\end{pmatrix}^{-1}\begin{pmatrix}\pmb \xi'_\perp \mathbf D\pmb \eta_\perp &\pmb \xi'_\perp \mathbf D\pmb \eta \\ \mathbf 0_{r-c\times c} &\bm \xi' \bm \xi \bm \eta'\pmb \eta \end{pmatrix}.$$  
As $\bm \xi' \bm \xi \bm \eta'\pmb \eta$ is non singular, the determinant of $\mathbf M(1)$  vanishes if and only if 
the determinant of $\pmb \xi'_\perp \mathbf D\pmb \eta_\perp$ vanishes.

A consequence of Proposition \ref{montidc} and Proposition \ref{propp1}(I) is that generically there exists a finite-degree $r\times r$ polynomial matrix 
$$\mbf N(L)= \mbf I_r + \mbf N_1 L + \cdots + \mbf N_p L^p  ,$$ 
for some $p$, such that: (i)~$\mbf N(L)\mbf M(L)= \mbf M(0)$, i.e. $\mbf N(L)$ is a left inverse of $\mbf M(L) $; (ii)~all the roots of $\det (\mbf N(L))$ lie outside the unit circle, so that $\mbf N(1)$ has full rank.
  
In conclusion, for generic values of the parameters in $\Pi$, 
$$\mbf{ A}(L) \mbf F_t = \mbf h+\mbf C(0) \mbf u_t,$$
where    
\begin{equation}\label{fastidious3}\begin{aligned}\mbf { A} (L) &= 
\mbf I_r+ \mbf{ A}_1L+ \cdots +\mbf{ A}_PL^{P}=
 \bm \zeta ^{-1} \mbf N(L) \begin{pmatrix} \mbf I_c  & \mbf 0\\ \mbf 0 & (1-L) \mbf I_{r-c}\end{pmatrix} \bm \zeta\mbf S(L)\\ &= \bm \zeta^{-1} \mbf N(L) \begin{pmatrix} \bm \xi_\perp'\\ (1-L)\bm \xi'\end{pmatrix}\mbf S(L),\end{aligned} \end{equation} 
with $P=p+1+s_2$, and
\begin{equation}\label{fastidious}\mbf h =\mbf A(1)\begin{pmatrix} \mbf k
 \\   \mbf 0_{(r-c)\times1} \end{pmatrix}.  \end{equation}
Defining
 \begin{equation} \label{fastidious1} \bm \alpha=\bm \zeta^{-1} \mbf N(1) \begin{pmatrix} \mbf I_c \\ \mbf 0_{(r-c)\times c}  \end{pmatrix},\ \ \ \   \bm \beta =\mbf S(1)'\bm \xi_{\perp}, \end{equation}
both $\bm \alpha $ and $\bm \beta$ have generically  rank $c$ (regarding $\bm \alpha$, remember that $\mbf N(1)$ has full rank) and
 $\mbf A(1)=\bm \alpha \bm \beta'$.  Lastly, define
\begin{equation}\label{fastidious2}\mbf A^*(L) =(1-L) ^{-1} (\mbf A(L) -\mbf A(1) L).\end{equation}
We have proved the following statement.
 
\begin{prop} \label{grtrr} \textbf{(Granger Representation Theorem for reduced-rank I(1) vectors)} 
Let  $\mathbf F_t$ be a rational reduced-rank I(1)  family with cointegration rank $c$ and parameter set $\Pi$, so that 
$$(1-L)\mbf F_t=\mbf S(L)^{-1}\mbf C(L) \mbf u_t.$$     For generic values of the parameters in $\Pi$, 
(i)~$\mathbf F_t$ can be determined such that   $\bm \beta' \mbf F_t=\bm \xi_\perp ' \mbf S(1) \mbf F_t$  is weakly stationary with rational spectral density and mean $\mbf k$,   (ii)~$\mbf F_t$ has the Error Correction representation 
 \begin{equation}\label{faziocoglione}
 \mbf A(L) \mbf F_t = \mbf A^* (L) (1-L) \mbf F_t + \bm \alpha \bm \beta' \mbf F_{t-1}  = \mbf h + \mbf C(0) \mbf u_t.\end{equation}
 where the $r\times r$  finite-degree polynomial matrices $\mbf A(L) $ and $\mbf A^*(L)$,  the  full-rank $r\times c$ matrices $\bm\alpha $ and $\bm \beta$,  the $r$-dimensional constant vector $\mbf h$, have been defined above in \eqref{fastidious3}, \eqref{fastidious2}, \eqref{fastidious1}, \eqref{fastidious}, respectively.     Generically, $\pmb \beta '\mathbf  F_t$ is $I(0)$.
\end{prop}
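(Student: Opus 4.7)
The strategy is the classical Granger Representation construction, adapted to the singular reduced-rank setting by replacing the usual matrix inversion step with the finite-degree left inverse supplied by Proposition \ref{propp1}(I). The algebraic lever is the decomposition $\mbf C(L) = \bm{\xi}\bm{\eta}' + (1-L)\mbf D + (1-L)^2\mbf E(L)$ from Definition \ref{def2}, which isolates the reduced-rank part acting at $z=1$ and leaves everything else with an explicit factor of $(1-L)$.

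First I would introduce the generically invertible matrix $\bm{\zeta} = \begin{pmatrix}\bm{\xi}'_\perp \\ \bm{\xi}'\end{pmatrix}$, chosen so that $\bm{\zeta}\mbf C(1)$ has its top $c$ rows equal to zero. Premultiplying $(1-L)\mbf S(L)\mbf F_t = \mbf C(L)\mbf u_t$ by $\bm{\zeta}$ gives the identity \eqref{compagnalanzillotta}, and cancelling the common factor $(1-L)$ from the top $c$ rows produces the level equation \eqref{oddiosanto} for $\bm{\xi}'_\perp\mbf S(L)\mbf F_t$. A suitable choice of the stochastic integration constants $\mbf W$ (deferred to \ref{A}) makes $\bm{\xi}'_\perp\mbf S(L)\mbf F_t$ stationary with mean $\mbf k$ as in \eqref{oddiosanto1}. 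Writing $\mbf S(L) = \mbf S(1) + (1-L)\mbf S^*(L)$ then transfers that stationarity to $\bm{\beta}'\mbf F_t = \bm{\xi}'_\perp\mbf S(1)\mbf F_t$, which yields part (i) and the final I(0) assertion of the proposition.

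Stacking the level equation for the top $c$ rows with the original differenced equation for the bottom $r-c$ rows produces a block system whose right-hand side, after extracting the constant $\mbf k$, is $\mbf M(L)\mbf u_t$ for the explicit polynomial $\mbf M(L)$ defined just before Proposition \ref{montidc}. The pivotal step is Proposition \ref{montidc}, which asserts that $\mbf M(L)$ is generically zeroless. Given this, Proposition \ref{propp1}(I) supplies a finite-degree polynomial stable left inverse $\mbf N(L)$ with $\mbf N(L)\mbf M(L) = \mbf M(0) = \mbf C(0)$ and $\det \mbf N(z) \neq 0$ for $|z|\le 1$. Premultiplying the block system by $\bm{\zeta}^{-1}\mbf N(L)$ then delivers the finite-order autoregression $\mbf A(L)\mbf F_t = \mbf h + \mbf C(0)\mbf u_t$ with $\mbf A(L)$ and $\mbf h$ exactly as in \eqref{fastidious3}–\eqref{fastidious}.

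The VECM form now falls out by linear algebra. Evaluating $\mbf A(L)$ at $L=1$ kills the bottom block of \eqref{fastidious3}, leaving $\mbf A(1) = \bm{\alpha}\bm{\beta}'$ with the factors \eqref{fastidious1}, both of generic rank $c$ because $\mbf N(1)$ is invertible. Splitting $\mbf A(L) = \mbf A(1)L + (1-L)\mbf A^*(L)$ defines $\mbf A^*(L)$ via \eqref{fastidious2}; it is automatically a polynomial since $\mbf A(L) - \mbf A(1)L$ vanishes at $L=1$. Substituting yields \eqref{faziocoglione}. The only non-routine step in the whole argument is Proposition \ref{montidc}—showing that the explicit $r\times q$ polynomial matrix built from $\bm{\xi},\bm{\eta},\mbf D,\mbf E(L)$ has full column rank at every $z\in\mathbb{C}$ generically in $\Pi$—and that is where I would concentrate the real work, deferring the verification to \ref{B} as the authors do.
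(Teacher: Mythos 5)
Your proposal is correct and follows essentially the same route as the paper: premultiplication by $\bm\zeta$, cancellation of the factor $(1-L)$ in the top $c$ rows, the choice of the integration constant $\mathbf W$ deferred to \ref{A}, zerolessness of $\mbf M(L)$ via Proposition \ref{montidc}, the finite-degree stable left inverse from Proposition \ref{propp1}(I), and the factorization $\mbf A(1)=\bm\alpha\bm\beta'$. The only slip is that $\mbf M(0)=\bm\zeta\,\mbf C(0)$ rather than $\mbf C(0)$, but this washes out once you premultiply by $\bm\zeta^{-1}$, so your final equation is exactly \eqref{faziocoglione}.
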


%


In Definition \ref{def2} we have not assumed that $\mathbf u_t$ is fundamental for $(1-L)\mathbf F_t$. However, 
\begin{equation}\label{label} \mathbf C(L) = \pmb \zeta ^{-1} \begin{pmatrix}(1-L) \mathbf I_c & \mathbf 0 \\ \mathbf 0 & \mathbf I_{r-c}\end{pmatrix} \mathbf  M(L).\end{equation}
Therefore, by Proposition \ref{montidc},  generically the matrix $\mathbf C(L) $ has  full rank  $q$ for $|z|<1$.   Thus, see \cite{rozanov}, pp. 43--7:

\begin{prop}\label{montidc230}  Assume that  the family of I(1) processes $\mbf F_t$, such that $(1-L)\mbf F_t=\mbf S(L)^{-1}\mbf C(L) \mbf u_t$, is a rational reduced-rank I(1)  family with cointegration rank $c$ and parameter set $\Pi$.  For generic values of the parameters  the  vector $\mathbf u_t$ is fundamental for the vector $(1-L)\mathbf F_t$.
\end{prop}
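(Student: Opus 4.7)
The plan is to reduce the claim to a pointwise rank condition on $\mathbf U(z) = \mathbf S(z)^{-1}\mathbf C(z)$ inside the open unit disk, and then read that condition off the factorization \eqref{label} by invoking Proposition \ref{montidc}. The cited result in \cite{rozanov} (pp.~43--7), already used implicitly in the statement of Definition \ref{iuno}, says that for an $r\times q$ rational matrix $\mathbf U(L)$ analytic on $\{|z|\leq 1\}$, the white noise $\mathbf u_t$ in $(1-L)\mathbf F_t=\mathbf U(L)\mathbf u_t$ is fundamental if and only if $\mathbf U(z)$ has full column rank $q$ for every $z$ with $|z|<1$. So the entire proposition reduces to verifying that last rank condition generically.

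For the rank calculation, I would substitute $z$ into the identity \eqref{label},
\[
\mathbf C(z) = \pmb \zeta^{-1}\begin{pmatrix}(1-z)\mathbf I_c & \mathbf 0\\ \mathbf 0 & \mathbf I_{r-c}\end{pmatrix}\mathbf M(z),
\]
and analyse each factor separately. The constant matrix $\pmb\zeta^{-1}$ is invertible generically (comment C1 on Definition \ref{def2}, which relies on $\pmb\xi$ having full rank $r-c$). The middle diagonal block has determinant $(1-z)^c$, which is non-zero for every $z\neq 1$ and in particular throughout $\{|z|<1\}$, so it is non-singular there. Finally, Proposition \ref{montidc} gives that for generic parameters $\mathbf M(z)$ is zeroless, hence has column rank $q$ at every $z\in\mathbb C$. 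A product whose first two factors are invertible and whose third factor has column rank $q$ retains column rank $q$, so $\mathbf C(z)$ has rank $q$ on the open unit disk.

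To transfer this to $\mathbf U(z)$, recall that Definition \ref{def2}(iii) forces $\det\mathbf S(z)$ to have no zeros in $\{|z|\leq 1\}$, so $\mathbf S(z)^{-1}$ exists and is non-singular on the closed unit disk; left-multiplication by it preserves column rank, and therefore $\mathbf U(z)=\mathbf S(z)^{-1}\mathbf C(z)$ has rank $q$ for every $|z|<1$. Combined with the Rozanov criterion in the first paragraph, this is exactly fundamentalness of $\mathbf u_t$.

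There is no real obstacle once Proposition \ref{montidc} is in hand; the work of the proof was already absorbed into establishing the zeroless property of $\mathbf M(L)$. The only point to watch is that the middle diagonal block in \eqref{label} is singular at $z=1$, so $\mathbf C(z)$ and $\mathbf U(z)$ do drop rank on the unit circle; but this is precisely the signature of the $I(1)$/cointegrated structure and is harmless for fundamentalness, which is a condition purely on the strict interior $|z|<1$.
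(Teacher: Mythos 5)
Your proof is correct and follows essentially the same route as the paper: factor $\mathbf C(z)$ via \eqref{label}, invoke Proposition \ref{montidc} for the generic zerolessness of $\mathbf M(z)$, observe that the remaining factors are non-singular on $\{|z|<1\}$ (and $\mathbf S(z)^{-1}$ on the closed disk), and conclude by the Rozanov full-rank criterion for fundamentalness. The only difference is that you spell out the steps the paper leaves implicit.
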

As recalled in Section \ref{sop2}, if $(1-L) \mathbf F_t = \mathbf S(L)^{-1}\mathbf C(L) \mathbf u_t$, then   $\mathbf u_t$ is fundamental  for $(1-L) \mathbf F_t$  if it belongs to the space spanned by $(1-L)\mathbf F_\tau$, $\tau\leq t$. This is not inconsistent with the fact that when $c>r-q$,   so that $\mathbf C(z)$ is not  full rank for $z=1$, 
there are not autoregressive representations for  $(1-L)\mathbf F_t$, either finite or infinite,   and therefore no 
representations  of the form
$$\mathbf u_t=(\bm {\mathcal A}_0+ \bm{\mathcal A}_1   L +\bm{ \mathcal A}_2 L^2 +\cdots )(1-L) \mathbf F_t=\lim_{n\to \infty}\sum_{k=0}^n {\mathcal A}_k (1-L) \mathbf F_{t-k} ,$$
where the matrices $\bm{\mathcal A}_j$ are $q\times r$.
Indeed, fundamentalness of $\mathbf u_t$ only implies that $\mathbf u_t$ can be obtained as the limit of linear combinations of  the vectors $(1-L) \mathbf F_\tau$, $\tau\leq 0$, i.e.
$$ \mathbf u_t = \lim _{n\to \infty} \sum _{k=0}^\infty \bm {\mathcal B} ^{(n)} _k (1-L)\mathbf F_{t-k},$$
where the coefficient matrices depend both on $k$ and $n$.\footnote{For example, if $x_t=(1-L)u_t$, where $u_t$ is a univariate white noise, then $u_t$ is fundamental for $x_t$ although  no autoregressive representation $x_t+a_1 x_{t-1}+ a_2 x_{t-2}+\cdots= u_t$
exists. See 
\cite{brockwelldavis}, p. 111, Problem 3.8.\label{DAVIS}}

Lastly, let us recall that neither $\mathbf F_t$ nor $\mathbf u_t$ are identified in the factor model 
$$ \mathbf x_t = \pmb \Lambda \mathbf F_t+\pmb \epsilon _t,\ \ \  (1-L)  \mathbf F_t=\mathbf S(L)^{-1}\mathbf C(L) \mathbf u_t.$$
 In particular, if $\mathbf F_t=\mathbf H \mathbf { F}^*_t$, where  $\mathbf H$   is  $r\times r$ and invertible, 
$$ \mathbf x_t =\pmb {\ \Lambda }^*\mathbf { F}^*_t +\pmb  \epsilon_t,\ \ \ \ (1-L) \mathbf { F}^*_t = \mathbf S^*(L)^{-1} \mathbf  { C}^*(L) \mathbf u_t,$$
where $\pmb { \Lambda}^* = \pmb \Lambda\mathbf H $,  $\mathbf { S}^*(L) = \mathbf H^{-1} \mathbf S(L) \mathbf H$,  $\mathbf { C} ^*(L) = \mathbf H^{-1} \mathbf C(L)$ (see also Section \ref{tristru}).     In particular, if $\mathbf H^{-1}=\pmb \zeta\mathbf S(1) $,   the first 
$c$ coordinates  of 
$\mathbf { F}^*_t$   are $I(0)$ and the remaining $r-c=q-d$ are $I(1)$.\footnote{More precisely, the first $c$ equations in  $(1-L) \mathbf F^*_t = \mathbf S^*(L) ^{-1} (L) \mathbf C^*(L) \mathbf u_t$ have an $I(0)$ solution (the argument goes as in   the discussion of equation \eqref{compagnalanzillotta}). 
This is consistent with Definition \ref{iuno}(ii), some of the coordinates of a $I(1)$   vector can be $I(0)$.}  Moreover,  the $c$ coordinates of the  error vector
${\pmb \beta^*}' \mathbf F^*_{t-1}$  in representation \eqref{faziocoglione}  for $\mathbf F_t^*$    are linear combinations of the $I(0)$ factors  alone.


\subsection{VECMs and unrestricted VARs in the levels}\label{giorgisega}
Several papers  have addressed the issue  whether  
and when an Error Correction model or an unrestricted  VAR in the levels should be used for estimation in the case of non-singular cointegrated  vectors: \cite{simsstockwatson} have shown that 
the parameters of a cointegrated VAR are consistently estimated using an unrestricted  VAR in the levels; on the other hand, \citet{phillips98} shows that if  the variables are cointegrated, the long-run features of the impulse-response functions are consistently estimated only if the unit roots are explicitly taken into account, that is within a  VECM specification.   
The simulation exercise described below provides some evidence in favour of the VECM specification in the singular case. 

We generate $\mathbf F_t$ using  a specification of  \eqref{faziocoglione} with $r=4$, $q=3$, $d=2$, so that $c=r-q+d=3$. The $4\times 4$ matrix $\mathbf A(L)$ is of degree $2$.  Moreover,   the upper $3\times 3$  submatrix  of $\mathbf C(0)$ 
is lower triangular (see \ref{sec_dgp} for details).  We estimate a VECM as in \citet{johansen88,johansen91} and assuming $c$, the degree of $\mathbf A(L)$  and the identification  restrictions known.
  We replicate the generation $1000$ times for $T= 100,\ 500,\ 1000, \ 5000.$
For each replication, we estimate a (misspecified) VAR in  differences, a VAR in the levels and 
 a VECM, assuming known $c$ and the degree of $\mathbf A(L)$ and $\mathbf A^*(L)$. The  Root Mean Square Error  between estimated and actual 
 impulse-response functions  is computed  for each replication using all $12$ responses and averaged over all replications.  
 The results are   are shown in Table \ref{tab:simulations}.   We see  that  the RMSE of both the VECM and the LVAR  decreases 
 as $T$ increases. However,
 for all values of $T$,  the RMSE of the VECM stabilizes as the lag increases, whereas 
 it deteriorates for the LVAR, in line with the claim that the long-run rsponse of the variables are better estimated 
 with the VECM. 
 
 For the sake of simplicity, in the simulation exercise  the  ``structural shocks'' are determined by imposing restrictions  on the response of $\mathbf F_t$ to the shocks.  Precisely, the upper $3\times 3$ submatrix of $\mathbf C(0)$ is lower triangular. 
 However,   as we recalled in the Introduction and Section \ref{tristru},   
 neither the  factors $\mathbf F_t$   nor their response to the shocks,  have a direct
 economic interpretation.  In empirical work with actual data,  identification of the structural shocks  $\mathbf v_t$, which result as a linear transformation of $\mathbf u_t$, 
 is usually  obtained by imposing  restrictions on the impulse-response functions of the variables $x_{it}$ with respect to $\mathbf v_t$.

 \begin{table}[h]\caption{Monte Carlo Simulations. VECM: $r=4,  {\it q}$=3$, c=3$.}\label{tab:simulations}
\centering \footnotesize
\begin{tabular*}{.8\textwidth}{@{}@{\extracolsep{\fill}}ccccc|ccccc@{}}\hline \hline
&&&&&&&&&	\\[-9pt]															
	&	lags	&	DVAR	&	LVAR	&	VECM	&		&	lags	&	DVAR	&	LVAR	&	VECM	\\	[2pt]	\hline
&&&&&&&&&	\\[-11pt]			
\multirow{5}{*}{\rotatebox{90}{$T=100$}}	&	0	&	0.06	&	0.05	&	0.05	&	\multirow{5}{*}{\rotatebox{90}{$T=500$}}	&	0	&	0.02	&	0.02	&	0.02	\\	[2pt]	
	&	4	&	0.26	&	0.18	&	0.17	&		&	4	&	0.23	&	0.07	&	0.07	\\	[2pt]	
	&	20	&	0.30	&	0.37	&	0.22	&		&	20	&	0.25	&	0.14	&	0.09	\\	[2pt]	
	&	40	&	0.30	&	0.45	&	0.22	&		&	40	&	0.25	&	0.21	&	0.09	\\	[2pt]	
	&	80	&	0.30	&	0.57	&	0.22	&		&	80	&	0.25	&	0.32	&	0.09	\\	[2pt]	\hline
&&&&&&&&	\\[-11pt]																				
\multirow{5}{*}{\rotatebox{90}{$T=1000$}}	&	0	&	0.02	&	0.02	&	0.02	&	\multirow{5}{*}{\rotatebox{90}{$T=5000$}}	&	0	&	0.01	&	0.01	&	0.01	\\	[2pt]	
	&	4	&	0.23	&	0.05	&	0.05	&		&	4	&	0.22	&	0.02	&	0.02	\\	[2pt]	
	&	20	&	0.25	&	0.09	&	0.07	&		&	20	&	0.25	&	0.03	&	0.03	\\	[2pt]	
	&	40	&	0.25	&	0.13	&	0.07	&		&	40	&	0.25	&	0.04	&	0.03	\\	[2pt]	
	&	80	&	0.25	&	0.22	&	0.07	&		&	80	&	0.25	&	0.06	&	0.03	\\	[2pt]	\hline
\end{tabular*}

\begin{tabular}{p{.8\textwidth}}
\scriptsize Root Mean Squared Errors at different lags, when estimating the impulse response functions  of the simulated variables  $\mbf F_t$ to the common shocks $\mbf u_t$. Estimation is carried out using three different autoregressive representations: a VAR for $(1-L) \mbf F_t$ (DVAR), a VAR for $ \mbf F_t$ (LVAR), and a VECM with $c=r-q+d$ error correction terms (VECM). Results are based on 1000 replications.   For the data generating process see \ref{sec_dgp}. The  RMSEs are obtained averaging over all replications and all $4\times 3$ impulse responses.
\end{tabular}
\end{table}

\section{Cointegration of the variables $x_{it}$}\label{cointegrationDFM}
The relationship between  cointegration of  the factors $\mathbf F_t$ and  cointegration of the variables $x_{it}$ is now considered.   Let us firstly observe that, regarding model (\ref{unouno}), neither the assumptions (1) through (5)  listed in Section \ref{sop}, nor the asymptotic conditions (see e.g.   \cite{FGLR09}) say much on 
 the matrix $\pmb \Lambda$ and the vector   $\pmb\epsilon_t$  for a given finite $n$.   In particular, the first $r$   eigenvalues of  the matrix $\pmb \Lambda\pmb \Lambda'$ must diverge as $n\to \infty$, but this has no implications on the rank  of the matrix $\pmb \Lambda$ corresponding to, say, $n=10$.
Moreover, as we  see in Proposition \ref{spread}(iii), if the idiosyncratic components are $I(0)$,  then all $p$-dimensional  subvectors of $\mathbf x_t$ are cointegrated for  $p>q-d$, which is at odds with what is observed in the macroeconomic  datasets analysed in the empirical  Dynamic Factor Model literature.  This motivates assuming
that  $\pmb \epsilon _t$ is $I(1)$. In that case, see Proposition \ref{spread}(i), cointegration of $\mathbf x_t$ requires that both the common and the idiosyncratic components are cointegrated.  Some results 
are collected in the statement below.

\begin{prop}\label{spread}  Let $\mathbf x^{(p)}_t= \pmb \chi^{(p)}_t+\pmb \epsilon^{(p)}_t=\pmb \Lambda ^{(p)}\mathbf F_t +\pmb \epsilon^{(p)}_t$ be a $p$-dimensional  subvector of $\mathbf x_t$, $p\leq n$. Denote by $c_\chi^p$ and $c_\epsilon ^p$ the cointegration rank of $\pmb \chi^{(p)}_t$ and $\pmb \epsilon^{(p)}_t$ respectively.  Both range from $p$, stationarity, to $0$, no cointegration.

\begin{enumerate}
\item[(i)] $\mathbf x^{(p)}_t$  is cointegrated   only if  $\pmb \chi^{(p)}_t$ and $\pmb \epsilon^{(p)}_t$ are both cointegrated.  

\item[(ii)] If $p>q-d$ then $\pmb \chi^{(p)} _t$ is cointegrated. If $p\leq q-d$ and $\mathbf \Lambda^{(p)} $ is full rank then 
$\pmb \chi^{(p)}_t$ is not cointegrated.  If $p\leq q-d$ and rank$(\mathbf \Lambda^{(p)})<p $ then $\pmb \chi^{(p)}_t$ is cointegrated.
 
\item  [(iii)]~Let 
$V^\chi\subseteq \mathbb R^p$ and $V^\epsilon\subseteq \mathbb R^p$ be the cointegration spaces of $\pmb \chi^{(p)}_t$ and $\pmb \epsilon^{(p)}_t$ respectively.  The vector $\mathbf x^{(p)}_t $ is cointegrated if and only if 
the intersection of $V^\chi$ and $V^\epsilon$   contains non-zero vectors. In particular,
if $p>q-d$ and $c^\epsilon>q-d$ then $\mathbf x^{(p)} $ is  cointegrated.


\end{enumerate}
\end{prop}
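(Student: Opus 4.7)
The plan is to reduce everything to a single algebraic observation that follows from the orthogonality of the common and idiosyncratic shocks, and then to invoke elementary linear-algebra dimension counts together with the fact, established in Section \ref{ctf}, that $\mathbf U(1)=\mathbf S(1)^{-1}\mathbf C(1)$ has rank $q-d$.

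First I would prove the key identity $V^x=V^\chi\cap V^\epsilon$, which simultaneously yields (i) and the if-and-only-if in (iii). Because the common shocks $\mathbf u_t$ and the idiosyncratic components $\pmb\epsilon_t$ are orthogonal at all leads and lags (assumption (5) in Section \ref{sop}), the spectral densities of $(1-L)\pmb\chi^{(p)}_t$ and $(1-L)\pmb\epsilon^{(p)}_t$ add, so
\[
\mathbf a'\pmb\Sigma_{\Delta x}(0)\mathbf a=\mathbf a'\pmb\Sigma_{\Delta\chi}(0)\mathbf a+\mathbf a'\pmb\Sigma_{\Delta\epsilon}(0)\mathbf a,
\]
with each summand non-negative. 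By comment C7 on Definition \ref{iuno}, $\mathbf a\in V^x$ iff the left-hand side vanishes, and non-negativity forces both summands to vanish individually, i.e.\ $\mathbf a\in V^\chi\cap V^\epsilon$. The converse follows by adding two stationary processes.

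Next I would pin down the cointegration space of $\pmb\chi^{(p)}_t$ explicitly. From $(1-L)\pmb\chi^{(p)}_t=\pmb\Lambda^{(p)}\mathbf U(L)\mathbf u_t$ and comment C7, $\mathbf a\in V^\chi$ iff $\mathbf a'\pmb\Lambda^{(p)}\mathbf U(1)=\mathbf 0$, so $\dim V^\chi=p-\mathrm{rank}(\pmb\Lambda^{(p)}\mathbf U(1))$. Since $\mathrm{rank}(\mathbf U(1))=r-c=q-d$, the product has rank at most $\min(p,q-d)$, with equality in generic position. This delivers (ii): if $p>q-d$ then $\mathrm{rank}(\pmb\Lambda^{(p)}\mathbf U(1))\le q-d<p$ unconditionally, hence $\dim V^\chi\ge p-(q-d)>0$; if $p\le q-d$ and $\pmb\Lambda^{(p)}$ has full row rank with its rows transversal to the column space of $\mathbf U(1)$, the rank equals $p$ and $V^\chi$ is trivial; and if $\mathrm{rank}(\pmb\Lambda^{(p)})<p$, any $\mathbf a$ in the left kernel of $\pmb\Lambda^{(p)}$ automatically satisfies $\mathbf a'\pmb\Lambda^{(p)}\mathbf U(1)=\mathbf 0$ and therefore belongs to $V^\chi$.

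Finally, the ``in particular'' clause of (iii) follows from the Grassmannian bound
\[
\dim(V^\chi\cap V^\epsilon)\ge\dim V^\chi+\dim V^\epsilon-p.
\]
When $p>q-d$ the previous step gives $\dim V^\chi\ge p-(q-d)$, and by hypothesis $\dim V^\epsilon=c^\epsilon>q-d$, so the right-hand side is at least $c^\epsilon-(q-d)>0$ and the intersection is non-trivial, forcing cointegration of $\mathbf x^{(p)}_t$ by the first step. The only genuine subtlety, which I would spell out explicitly, is that the ``not cointegrated'' half of (ii) is not a consequence of $\pmb\Lambda^{(p)}$ being full rank alone: one also needs a mild transversality between the row span of $\pmb\Lambda^{(p)}$ and the column space of $\mathbf U(1)$ (equivalently, with $\pmb\xi_\perp$). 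This is however harmless, as it holds for $\pmb\Lambda^{(p)}$ in generic position, which is the reading consistent with the rest of the paper.
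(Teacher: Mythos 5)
Your proof is correct and follows essentially the same route as the paper's: orthogonality of common and idiosyncratic components makes the spectral densities at frequency zero add, which gives $V^x=V^\chi\cap V^\epsilon$ (the paper phrases the ``only if'' via Weyl's inequality on the smallest eigenvalue of $\pmb\Sigma^{(p)}_{\Delta x}(0)$, while you use the quadratic form $\mathbf a'\pmb\Sigma(0)\mathbf a$ directly, which is marginally cleaner and delivers the set identity needed for (iii) in one stroke); part (ii) is in both cases a rank count on $\pmb\Lambda^{(p)}$ times a rank-$(q-d)$ long-run matrix (the paper uses the common-trend loading $\pmb\Lambda^{(p)}\pmb\xi$ after setting $\mathbf S(L)=\mathbf I_r$, you use $\pmb\Lambda^{(p)}\mathbf U(1)$ via comment C7 --- these have the same column space); and the ``in particular'' of (iii) is the same Grassmann dimension bound. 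Your closing caveat is well taken and is in fact a point the paper glosses over: its proof of (ii) only invokes $\mathrm{rank}(\pmb\Lambda^{(p)}\pmb\xi)\le\min(\mathrm{rank}(\pmb\Lambda^{(p)}),\mathrm{rank}(\pmb\xi))$, which is an upper bound and cannot by itself establish the ``not cointegrated'' claim when $p\le q-d$ and $\pmb\Lambda^{(p)}$ is full rank; as you note, one additionally needs the row space of $\pmb\Lambda^{(p)}$ to be transversal to the orthogonal complement of $\mathrm{col}(\pmb\xi)$, which holds only generically.
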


\begin{proof}
Because $\chi_{it}$ and $\epsilon_{js}$ are orthogonal  for all $i,j,t,s$, see Assumption (5) for model \eqref{unouno},  the spectral densities of $\mathbf (1-L)\mathbf x^{(p)}_t$, $(1-L)\pmb\chi^{(p)}_t$, $(1-L)\pmb\epsilon^{(p)}_t$ fulfill:
\begin{equation} \label{juveperderai}\pmb \Sigma_{\Delta x}^{(p)}(\theta) = \pmb\Sigma_{\Delta \chi}^{(p)}(\theta) +\pmb \Sigma_{\Delta \epsilon}^{(p)}(\theta)\ \ \  \theta\in [-\pi,\pi].\end{equation}
  Now, \eqref{juveperderai} implies that 
\begin{equation}\label{cicerone1}\lambda_p\left (\pmb \Sigma^{(p)}_{\Delta x}(0)\right )\geq \lambda_p\left (\pmb \Sigma^{(p)}_{\Delta \chi}(0)\right )+ \lambda^{(p)}\left (\pmb \Sigma^{(p)}_{\Delta\epsilon}(0)\right ),\end{equation}
where $\lambda_p(A) $  denotes the smallest eigenvalue of the hermitian matrix $A$; this is one of the Weyl's inequalities, see \cite{Franklin}, p. 157, Theorem 1.   Because  spectral density  matrices are  non-negative definite, the right hand side  in \eqref{cicerone1} vanishes 
if and only if both terms on the right hand side vanish,  i.e. the spectral density of $\Delta \mathbf x_t^{(p)}$ is singular at zero if and only if 
the spectral densities of $\Delta \pmb \chi_t^{(p)}$ and $\Delta\pmb \epsilon_t^{(p)}$ are singular at zero. By definition \ref{iuno}, 
  (i) is proved.
 
Without loss of generality we can assume that $\mbf S(L)=\mbf I_r$.  By  substituting \eqref{PTdecomposition} in \eqref{unouno}, we obtain
\begin{equation}\label{PTtemp}
\mbf x_t = \bm \Lambda\left [ \left(\mbf G_1(L)\mbf v_{1t} +  \mbf G_2(L)\mbf v_{2t} +  \mbf T_t \right )+\mbf W\right]+ \bm \epsilon_t,
\end{equation}
where on the right hand side the only non-stationary term is $\mbf T_t$ and (possibly) $\bm\epsilon_t$. By recalling that $\mbf T_t=\bm\xi \sum_{s=1}^t \mbf v_{2s}$ where $\bm\xi$ is of dimension $r\times (q-d)$ and rank $q-d$, and by defining $\mathbfcal{G}_t=\bm \Lambda[\mbf G_1(L)\mbf v_{1t} +  \mbf G_2(L)\mbf v_{2t}+\mbf Z]$  and ${\mathbfcal T}_t=\sum_{s=1}^t \mbf v_{2s}$, we can rewrite \eqref{PTtemp} as:
$$
\mbf x_t = \pmb \Lambda \pmb \xi{\mathbfcal T}_t + \mathbfcal{G}_t +\bm \epsilon_t .
$$
For $\mathbf x^{(p)}_t$:
$$ \mathbf x^{(p)} _t =\pmb\chi^{(p)}_t + \pmb \epsilon^{(p)}_t=\pmb  \Lambda ^{(p)} \pmb \xi{\mathbfcal T}_t +{\mathbfcal G}^{(p)}_t +\pmb \epsilon^{(p)}_t ,$$
where  $\pmb \Lambda^{(p)}$ and ${\mathbfcal G}^{(p)}_t $ have an obvious definition.  
Of course cointegration of the common components $\pmb\chi^{(p)}_t$ is  equivalent to cointegration of  $\pmb  \Lambda ^{(p)} \pmb \xi{\mathbfcal T}_t$, which in turn is equivalent to rank$(\pmb  \Lambda ^{(p)} \pmb \xi)<p$.    Statement (ii)  follows from 
$${\rm rank} \left (\pmb \Lambda ^{(p)} \pmb \xi\right  )\leq \min \left ({\rm rank} (\pmb \Lambda^{(p)}),{\rm rank}(\pmb \xi)\right ).$$

The first part of (iii) is obvious.  Assume now that $p>q-d$. If $c_\chi^p+c^p_\epsilon={\rm dim}(V^\chi)+{\rm dim}(V^\epsilon) = p-(q-d) + c^p_\epsilon >p$, i.e. if $c^p_\epsilon>q-d$, then the intersection between $V^\chi$ and $V^\epsilon$ is non-trivial, so that $\mathbf x^{(p)}_t$ is cointegrated.   \end{proof}

\section{Summary and conclusions}\label{conclusion} 
The paper studies representation theory for Dynamic Factor Models when the factors are $I(1)$ and singular. 
Singular $I(1)$ vectors are cointegrated, with cointegration rank $c$ equal to $r-q$, the dimension  of $\mathbf F_t$ minus its rank, plus $d,$ with $0\leq d<q$.
We prove that  if  $(1-L) \mathbf F_t$ has rational spectral density, then generically  $\mbf F_t$ has an Error Correction representation 
with $c$ error terms and a finite autoregressive matrix polynomial.  Moreover, $\mathbf F_t$ is driven by $r-c$ permanent shocks and 
$d$ transitory shocks, with $r-c+d=q$, not $r$ as in the  non-singular case.  These results are obtained by combining the standard 
results on cointegration with recent results on singular stochastic vectors.
 
Using simulated  data generated by a simple singular VECM,    confirms   previous results, obtained for non-singular vectors,  showing that under cointegration the long-run features of impulse-response  functions are better estimated using a VECM rather than a VAR in the levels.

In Section \ref{cointegrationDFM} we argue that stationarity of the idiosyncratic components would produce 
an amount of cointegration for the observable variables $x_{it}$ that is not observed in the datasets that are  standard
 in Dynamic Factor Model literature, see e.g.  \cite{stockwatson02JASA,stockwatson02JBES,stockwatson05},
 \citet{FGLR09}. Thus the idiosyncratic  vector  in those  datasets is likely to be $I(1)$ and, in light of this, an estimation strategy robust to the assumption that some of the idiosyncratic variables $\epsilon_{it}$ are $I(1)$ should be preferred.

The results in this paper are the basis for estimation of $I(1)$ Dynamic Factor Models with cointegrated factors, which is developed in the companion paper \citep{BLLirf}.

\bibliographystyle{chicago}

\bibliography{BLL_biblio}


\clearpage
\renewcommand\appendix{\par
\setcounter{section}{0}%
\setcounter{subsection}{0}
\setcounter{equation}{0}
\setcounter{table}{0}
\setcounter{figure}{0}
\gdef\thesection{Appendix \Alph{section}}
\gdef\thefigure{\Alph{section}\arabic{figure}}
\gdef\theequation{\Alph{section}\arabic{equation}}
\gdef\thetable{\Alph{section}\arabic{table}}
}
\appendix

\section{Proofs}\label{sec_proofs}

\subsection{Stationary solutions of $(1-L) \mbf  y_t = (1-L) \pmb \zeta_t$}\label{A}
Consider firstly the difference equation  in the unknown $g$-dimensional  vector  process $\pmb \zeta_t$:  
\begin{equation}\label{eq:appe} (1-L) \pmb\zeta_t =\mathbf w_t, \ \ \  t\in \mathbb Z,\end{equation}
where $\mathbf w_t$ is a $g$-dimensional stochastic  process  with  $w_{jt}\in L_2(\Omega,{ F},P)$.  
Define a solution 
 of \eqref{eq:appe} as a process $\mathbf y_t$, $t\in \mathbb Z$,   $Y_{jt}\in L_2(\Omega,{\cal F},P)$, such that $(1-L) \mathbf Y_t=\mathbf w_t$.  If 
 $\mathbf {\tilde y}_t$ is a solution, 
 then all the solutions of \eqref{eq:appe} are $\mathbf{\tilde y}_t+ \mathbf W$, where $\mathbf W$ is a $g$-dimensional  stochastic variable with  $W_{jt}\in L_2(\Omega,{\cal F},P)$, i.e. a particular solution plus  a constant stochastic process (a solution of the homogeneous equation  $(1-L) \mathbf y_t=\mathbf 0$).
  
Assume that $\mbf z_t$ is a $g$-dimensional, weakly stationary process with  a moving-average  representation $\mbf z_t = \bm{Z}(L) \mbf v_t$,   where:

\noindent  (i)~$\mbf v_t$ is an $s$-dimensional  non-singular white noise and $s\leq g$
belonging to   $L_2(\Omega ,{\cal F}, P)$, 

\noindent (ii)~$\bm{Z}(L)$ is a $g\times s$  square-summable matrix such that $ z_{kt}$  and $v_{jt}$, for $t\in \mathbb Z$, $k=1,2,\ldots,g$, $j=1,2,\ldots,s$, span the same subspace of  $L_2(\Omega ,{\cal F}, P)$.\footnote{A very weak condition. However, if $\mathbf Z(L)$ is a band-pass filter, (ii) does not hold.}   

Now consider the equation
\begin{equation}\label{lanzillotta} (1-L )\pmb \zeta_t = (1-L) \mbf z_t.\end{equation}
Because $\mbf z_t$ trivially fulfills (\ref{lanzillotta}),   all the solutions of \eqref{lanzillotta} are
  $$ \mbf y_t= \mbf z_t+\mathbf K$$
where $\mbf K$  is a $g$-dimensional stochastic  vector belonging to $L_2(\Omega ,{\cal F}, P)$.
We want to determine the conditions such that the solution $\mathbf y_t$ is weakly stationary with a spectral density.

 Let 
$$ \mbf K = \sum_{k=-\infty}^\infty \mathbf P_k\mathbf v_k+ \mbf H.$$
be the orthogonal projection of $\mbf K $ on the space spanned by $\mbf v_k$, $k\in \mathbb Z$.   Setting $\mathbf V= \sum_{k=-\infty}^\infty \mathbf P_k\mathbf v_k$, 
because $\mathbf H$ is orthogonal to $\mathbf V$ and $\mathbf z_\tau$, $\tau\in \mathbb Z$,
$$  {\rm E}(\mbf y_t \mbf y'_{t-k}) = {\rm E} (\mbf z_t \mbf z'_{t-k})+{\rm E} (\mbf V\mbf V') + {\rm E}( \mbf H \mbf H') +{\rm E}(\mbf z_t  \mbf V') +{\rm E} ( \mbf V \mbf z'_{t-k}) . $$
Given $k$, the last two terms tend to zero when $t$ tends to infinity (by the same argument used to prove that the autocovariances of a moving average tend to zero as the lag tends to infinity).  Weak stationary of $\mathbf y_t$   implies that
$${\rm E}(\mbf z_t  \mbf V') +{\rm E} ( \mbf V \mbf z'_{t-k})=0,$$
for all  $t\in \mathbb Z$.    On the other hand, given $t$,  ${\rm E} ( \mbf V \mbf z'_{t-k})$ tends to zero as $k$ tends to infinity (again the argument on autocovariances), so that 
${\rm E}(\mbf z_t  \mbf V')=0$ for all $t$.   Orthogonality of $\mathbf V$ to all $\mathbf z_t$ implies orthogonality to all $\mathbf v_t$, see assumption  (ii) above. 
 As $\mbf V$ is an average of  $\mbf v_k$, $k\in \mathbb Z$, then $\mbf V=\mathbf 0$.   In conclusion, all the stationary solutions of (\ref{lanzillotta})  are $\mbf y_t =\mbf z_t+\mathbf K$ with $\mbf K$ orthogonal to $\mbf z_t$ for all $t\in \mathbb Z$.

Lastly, the spectral measure  of $\mathbf y_t$ has a jump at frequency zero unless the variance-covariance matrix of $\mathbf K$ is zero.
Thus  $\mbf y_t$ has a spectral density if and only if $\mbf K$ is a constant vector (i.e.   $\mathbf K(\omega)=\mathbf k$ almost surely in $\Omega$).  In that case the spectral densities of $\mbf y_t$ and  $\mbf z_t$ coincide. 

Using the results above we can prove the statement in comment C7 on  Definition \ref{iuno}, Cointegration. 
If $\mathbf y_t$ is such that  $(1-L) \mathbf y_t = \mathbf V(L) \mathbf v_t$, then 
 $\mathbf y_t =\mathbf {\tilde y}_t + \mathbf W$, where
$$\mbf{\tilde y}_t  =  \mathbf V(L) \pmb\mu_t \ \ \ \  \hbox{where} \ \ \ \ 
\pmb \mu_t =\begin{cases} \hbox{$\mbf v_1+\mbf  v_2+ \cdots +  \mbf v_t$,  for  $t>0$}\\  \hbox{$\mathbf 0$, for $t=0$}\\
\hbox{$-( \mbf v_0+\mbf v_{-1}+\cdots + \mbf v_{t+1})$, for $t<0$}\\
\end{cases}
$$
and $\mathbf W$ is an $r$-dimensional stochastic vector. 
If
$\mathbf c$ is a cointegration vector,  then
$$ (1-L) \mathbf c' \mathbf  y_t = \mathbf c' \mathbf V(L) \mathbf v_t =  \mathbf c' \left [ \mathbf V(1) + (1-L) \frac{\mathbf V(L) -\mathbf V(1)}{1-L}\right ]\mathbf v_t= (1-L)\mathbf c'\mathbf V^* (L) \mathbf v_t,$$
where trivially the entries of $\mathbf V^*(L)=(\mathbf V(L) -\mathbf V(1)) / (1-L)$ are rational functions of $L$ with no poles of modulus less or equal to $1$.
From the last equation  we obtain 
$$\mathbf c'\mathbf y_t =\mathbf c' \mathbf {\tilde y}_t +\mathbf c'\mathbf W =  \mathbf c'\mathbf V^*(L) \mathbf v_t+w,$$
where  $w$ is a
stochastic variable.   The process $\mathbf  c'\mathbf y_t$ is weakly stationary with rational spectral density if and only if $w$ is a constant  with probability one.  On the other hand,
$$ \mathbf  c'\mathbf {\tilde y} _t = \mathbf c '\mathbf V(L) \pmb \mu_t = (1-L) \mathbf c' \mathbf V^* (L) \pmb \mu_t= \mathbf c' \mathbf V^* (L) \mathbf v_t,$$
so that $\mathbf c' \mathbf W= w$.  In conclusion,  $\mathbf c' \mathbf y_t$ is weakly stationary with rational spectral density if and only if, in the solution 
$\mathbf y_t= \mathbf {\tilde y}_t+ \mathbf W$, the stochastic vector
 $\mathbf W$ is chosen such that 
$\mathbf c' \mathbf W$ is constant with probability one.

The same reasoning applies to equation
 \eqref{oddiosanto}, to prove that 
 $\bm \xi '_\perp \mbf S(L)\mbf F_t $  is weakly stationary with  rational  spectral density  if and only if  \eqref{oddiosanto1} holds and 
 $\pmb \xi_\perp '\mathbf S(1) \mathbf W=\mathbf k$.


%
%


%
\subsection{Proof of Proposition \ref{montidc}}\label{B}

With one exception at the end of the proof, we keep using  the notation  $\mathbf C(z)$, $\mathbf M(z)$, etc., avoiding explicit dependence on $\mathbf p\in \Pi$ (see Definition \ref{def1}).

\begin{rem}\label{rem1}  Suppose that the statement $S(\mathbf p)$, depending on a  vector $\mathbf p\in \Pi$, is equivalent to a set of polynomial equations for the parameters, for example the statement that all the $q\times q$ minors of $\mathbf M(1)$  vanish, i.e. that  ${\rm rank} (\mathbf M(1)) <q$. Statement $S(\mathbf p)$ is true either for a nowhere dense subset of $\Pi$ or for the whole $\Pi$. Thus, if  the statement is false for one point in $\Pi$, it is generically false in $\Pi$. Moreover, $S(\mathbf p)$ can be obviously extended to any $\mathbf p\in \mathbb R^\lambda$ and, as $\Pi$ is an open subset of $\mathbb R^\lambda$, if the statement $S$ is false for one point in $\mathbb R^\lambda$,   then it is  generically false in  $\mathbb R^\lambda$ and therefore in $\Pi$.
\end{rem}

\begin{rem}\label{bastapercarita} Remark \ref{rem1} can be used to show that generically the stochastic vectors of a reduced rank $I(1)$ family, see 
Definition \ref{def2},  are of rank $q$.   Let $\mathbf p^*$ be  a point in $\mathbb R^\lambda$ such that $\pmb \xi=\mathbf 0$, $\pmb \eta=\mathbf 0$, $\mathbf S(L)=\mathbf I_r$, $\mathbf E(L)=\mathbf 0$ and let $\mathbf D$ be of rank $q$ (note that $\mathbf p^*$ does not necessarily belong to $\Pi$).  Given $\theta^*\in [-\pi,\ \pi]$, $\theta^*\neq 0$,  the matrix $\mathbf S(e^{-i\theta^*})^{-1}\mathbf C(e^{-i\theta^*})$ is  equal to $(1-e^{-i\theta^*})\mathbf D$ at $\mathbf p^*$ and  has therefore  rank
$q$, so that, by Remark \ref{rem1}, has rank $q$ generically in $\Pi$. Thus, generically in $\Pi$, the spectral density of $(1-L) \mathbf F_t$, which is a rational function of $e^{-i\theta}$,  has   rank $q$ except for  a finite subset of $ [-\pi,\ \pi]$ (depending  on $\mathbf p$).

\end{rem}

\begin{rem}\label{rem2}
Consider the polynomials
$$ A(z) = a_0 z^n+a_1z^{n-1}+\cdots+ a_n,\ \ \  B(z) =b_0 z^m+b_1 z^{m-1}+ \cdots + a_m$$  
and let  $\alpha_i$, $i=1,\ldots,n$ and $\beta_j$, $j=1,\ldots,m$, be the roots of $A$ and $B$ respectively.
Suppose that $a_0\neq 0$ and $b_0\neq 0$. 
Then, see \citet[pp. 83-8]{vanderWaerden},  
$$a_0^mb_0^n \prod_{i,j}(\alpha_i-\beta_j)=R(a_0,a_1,\ldots,a_n; b_0,b_1,\ldots,b_m),$$
where $R$ is a polynomial function.  The function $R$ is called the resultant 
of $A$ and $B$. 
The resultant vanishes if and only if  $A$ and $B$ have a common  root. Now suppose that the coefficients $a_i$ and $b_j$ are polynomial functions of $\mathbf p\in \Pi$.  Then, by Remark \ref{rem1},  if there exists  a point $\mathbf {\tilde p}\in \Pi$  (or $\mathbf{ \tilde p}\in \mathbb R^\lambda$) such that $a_0(\mathbf {\tilde p})\neq 0$, $b_0(\mathbf{\tilde p})\neq 0$, and $R(\mathbf{\tilde p})\neq 0$, then 
generically $A$ and $B$ have no common roots.
\end{rem}

\begin{rem}\label{rem3} 
Recall that  a zero of $\mbf M(z)$ is a complex number $z^*$  such that ${\rm rank}(\mathbf M(z^*)<q$ (see Proposition \ref{propp1}).  
 If 
$\mathbf M(z)$  has two $q\times q$ submatrices  whose determinants have no common roots, then 
$\mathbf M(z) $ is zeroless.
\end{rem}

 \medskip
Starting with 
 $$ \mbf C(z) = \bm \xi \bm \eta' + (1-z) \mbf D + (1-z)^2 \mbf E(z),$$
 we obtain, see Section \ref{ECM},
 $$\begin{aligned}\bm \zeta \mbf C(z)&=\begin{pmatrix} (1-z)\mbf I_c & \mbf 0\\ \mbf 0 &\mbf I_{r-c}\\ \end{pmatrix}   \left \{ \begin{pmatrix} \bm \xi_\perp ' \mbf D\\ \bm \xi ' \bm \xi \bm \eta'\end{pmatrix} + (1-z)
\begin{pmatrix}  \bm \xi_\perp' \mbf E(z) \\
\bm \xi ' \mbf D\\ 
  \end{pmatrix}+ (1-z)^2\begin{pmatrix}\mbf 0 _{c\times q}  \\ \bm \xi ' \mbf  E(z)\\ 
  \end{pmatrix} \right \}\\
  & =  \begin{pmatrix} (1-z)\mbf I_c & \mbf 0\\ \mbf 0 &\mbf I_{r-c}\\ \end{pmatrix}\mbf M(z).\\ \end{aligned}$$

With no loss of generality we can assume that $r=q+1$, see Remark \ref{rem3}. We denote  by $\mbf M_1(z)$ and $\mbf M_2(z)$ the  $q\times q$ matrices 
obtained by dropping the first  and the last row of $\mbf M(z)$ respectively. The degrees of  the polynomials $\det(\mbf M_1(z))$ and $\det(\mbf M_2(z))$ are $d_1= (q-d)(s_1+2)+d(s_1+1)$ and $d_2=(q-d-1)(s_1+2) + (d+1) (s_1+1)$.  

Let us now define a subfamily of  $\mbf M(z)$, denoted by $\mbf  {\underline M}(z)$, obtained by specifying $\pmb\eta$, $\pmb \xi$, $\pmb \xi'_\perp$, $\mathbf D$ and $\mathbf E(L)$ in the following way:
$$\bm {\underline \eta}'= \begin{pmatrix} \mbf 0_{(q-d)\times d}  \mbf I_{q-d}\\ \end{pmatrix},    \ \bm {\underline \xi} = \begin{pmatrix} \mbf I_{q-d} \\ \mbf 0 _{c\times (q-d)}
  \end{pmatrix},\  \ \bm {\underline \xi} _{\perp}' = \begin{pmatrix} \mbf K \\ \mbf H \end{pmatrix} , \ \mbf {\underline D} =\begin{pmatrix} \mbf H' & \mbf 0 _{(q+1)\times (q-d)}\end{pmatrix},\  \mathbf {\underline E} (z)= \begin{pmatrix}\mathbf E_1(z)\\ \mathbf E_2(z)\\ \mathbf E_3(z)\\ \end{pmatrix}, $$
where
$$
  \mbf K = \begin{pmatrix} \mbf 0 _{1\times(q-d)} &  1 & \mbf  0 _{1\times d}\end{pmatrix},\ \ \ \mbf H = \begin{pmatrix} \mbf 0_{d\times (q+1-d)} & \mbf I_d\end{pmatrix} ,$$
$$ \begin{aligned}\mbf E_{1}(z) &= \begin{pmatrix}\\& k_1(z)  & h_1(z) &\cdots &0 \\ \mbf 0_{(q-d)\times d}&  & \hskip-25pt\ddots&\hskip-25pt \ddots\\ &&&\hskip-29pt\ddots &  h_{q-d-1}(z)\\& 0 &\cdots & &
k_{q-d}(z) & &  \end{pmatrix} \\ \\ \mbf E_{2}(z) &= \begin{pmatrix}  e(z)&\mbf 0_{1\times (q-1)}  \end{pmatrix}\\  \\
\mbf E_{3}(z) &= \begin{pmatrix}f_1(z)  & g_1(z) &\cdots &0 \\  & \ddots&  \ddots&&\mbf 0_{d\times (q-d-1)}\\ 0 &\cdots & 
f_{d}(z) & g_{d}(z) &&  \end{pmatrix},
\\  
\end{aligned}$$  
the polynomial  entries   $e$, $k_i$, $h_i$, $f_i$ and $g_i$ being of degree  $s_1$.  We have:
$$\mbf{\underline M}(z) = \begin{pmatrix}\mbf 0_{1\times d } &  \mbf 0_{1\times (q-d)} \\ \mbf I_d & \mbf 0_{d\times(q-d)} \\ \mbf 0_{(q-d) \times d} & \mbf I_{q-d} \end{pmatrix} + (1-z) \begin{pmatrix} \mbf E_{2}(z) \\ \mbf E_{3}(z) \\ \mbf 0_{(q-d)\times q}\end{pmatrix}+ (1-z)^2 \begin{pmatrix} \mbf 0_{1\times q} \\ \mbf 0_{d\times q} \\ \mbf E_{1}(z) \end{pmatrix}.
$$
Notice that $\mathbf {\underline M}(z) $ has zero entries except for the  diagonal joining the positions $(1,1)$ and $(q,q)$, and the diagonal joining $(2,1)$ and $(q+1,q)$.  The matrices $\mbf{\underline M}_{1}(z)$ and $\mbf {\underline M}_{ 2}(z)$, obtained by dropping the first and the last row of $\mathbf {\underline M}(z)$, respectively,  are upper- and lower-triangular, respectively. Moreover,
$$\begin{aligned} \det (\mbf {\underline M}_{1}(z))&= [1+ (1-z) f_1(z) ]\cdots [(1  +(1-z)f_d(z) ] \\ &\ \ \ \ \ \ \ \times[1+(1-z)^2 k_1(z)]\cdots [1+(1-z)^2 k_{q-d}(z)]\\
\det (\mbf{\underline  M}_{ 2}(z))&=(1-z)^ {2q-d-1}e(z) [g_1(z) \cdots g_d(z)][  h_1(z)\cdots  h_{q-d-1}(z)]\\
\end{aligned}
$$
Now:
\begin{compactenum}[(i)]
	\item The leading coefficient of $\det (\mbf {\underline M}_{1}(z))$, call it $\underline Q_{1}$, corresponding to $z^{d_1}$, is the product of the leading coefficients of the polynomials $k_j(z)$, $j=1,\ldots, (q-d)$ and $f_i(z), $ $i=1,\ldots,d$.  Trivially,  there exist values for  the parameters of  the polynomials $k_j$ and $f_i$, such that $\underline Q_{1}\neq 0$.   Let $\pmb { \omega}_1$ be the vector of such parameters and $\mathbf {\underline M}^{\omega_1}_1(z)$ the matrix $\mathbf {\underline M}_1(z)$ corresponding to the parameters   in $\pmb \omega_1$.	
	\item Now observe, firstly,  that the polynomials $\det (\mbf {\underline M}_{1}(z))$ and $\det (\mbf {\underline M}_{2}(z))$  have no parameters in common,   and, secondly,  that the parameters of $\det (\mbf {\underline M}_{2}(z))$  vary in an open set (each one is  a subvector of the parameters vector of $\mathbf M(z)$, which varies in the open set $\Pi$).  As a consequence, there exist parameters for the polynomials $e$, $g_j$ and $h_i$,   call $\pmb \omega_2$ the vector of such parameters, such that   (1)~the leading coefficient  of  $\det (\mbf {\underline M}^{\omega_2}_{2}(z))$  does not vanish,  (2)~$\det (\mbf {\underline M}^{\omega_1}_{1}(z))$ and $\det (\mbf {\underline M}^{\omega_2}_{2}(z))$ have no roots in common.   This implies that, as the leading coefficient of $\det (\mbf {\underline M}^{\omega_1}_{1}(z))$ does not vanish as well, the resultant of $\det (\mbf {\underline M}^{\omega_1}_{1}(z))$  and $\det (\mbf {\underline M}^{\omega_2}_{2}(z))$  does not vanish, see Remark 2.
	\item Combining the parameters in $\pmb \omega_1$ and $\pmb \omega_2$, we determine a point $\mathbf{\underline p}\in \Pi$ such that, at $\mathbf {\underline p}$, the leading coefficient of $\det (\mbf {\underline M}_{1}(z))$ and  $\det (\mbf {\underline M}_{2 }(z))$
	and their resultant do not vanish.  As the leading coefficients  and the resultant 
	of $\det (\mbf { M}_{1}(z))$ and  $\det (\mbf {M}_{2 }(z))$ 
	are polynomial functions of the parameters in $\mathbf p$,  then
	by Remarks  2  and 3, $ \mbf M(z)$ is  generically  zeroless.
\end{compactenum}

\section{Non uniqueness}\label{nu} 
In Proposition \ref{grtrr} we prove that a singular $I(1)$ vector has a finite Error Correction representation with $c$ error correction terms. However, as anticipated in Section \ref{sop},  this representation is not unique since:
\begin{inparaenum}[(i)]
	\item different Error Correction representations can be obtained in which the number of error terms  varies between $d$ and $r-q+d$, 
	\item the left inverse of the matrix $\mbf M(L)$ may be not unique.
\end{inparaenum}
We discuss these two causes  of non-uniqueness for representation \eqref{faziocoglione} below.    In  \ref{unique1} we show that 
all such representation produce the same impulse-response functions.

\subsection{Alternative representations with different numbers of error correction terms}\label{unique2}
Let, for simplicity, $\mbf S(L)=\mbf I_r$ and consider the following example, with $r=3$, $q=2$, $c=2$, so that $d=1$:

$$\begin{aligned} \bm \xi' &=\begin{pmatrix} 1 & 1& 1\end{pmatrix} \\     \bm \eta'& = \begin{pmatrix} 1 & 2\\ \end{pmatrix} \\
                        \bm \xi_\perp ' &= \begin{pmatrix} 1 & -1 & 0\\ 0 & 1& -1\\ \end{pmatrix} \\                            
                               \end{aligned} $$  We have,
$$\begin{aligned}(1-L)\begin{pmatrix} \bm \xi_\perp'\\ \bm \xi'\end{pmatrix} \mbf F_t &= \begin{pmatrix} 1-L  & 0 & 0\\ 0& 1 - L& 0\\ 0&0&1\\ \end{pmatrix}\left \{ \begin{pmatrix} d_{11} - d_{21} & d_{12}- d_{22}\\     d_{21}-d_{31} & d_{22}-d_{32}\\  3 & 6\\ \end{pmatrix}     + (1-L) 
 \vphantom{\begin{pmatrix}1\\1\\1\\ \end{pmatrix}}  \mbf  G(L)
  \right \}   \mbf u_t,\end{aligned}$$      
where $(1-L) \mbf G(L)$ gathers the second and third terms  within curly brackets in the second line of (\ref{compagnalanzillotta}). If the first  matrix  within the curly brackets has full rank, we can proceed as in Proposition \ref{grtrr} and obtain an Error Correction representation with error terms
$$ \bm \xi _\perp' \mbf F_t= \begin{pmatrix} F_{1t} -F_{2t}\\ F_{2t} - F_{3t} \end{pmatrix}. $$   
However, we also have
$$\begin{aligned}(1-L)\begin{pmatrix} \bm \xi_\perp'\\ \bm \xi'\end{pmatrix} \mbf F_t &= \begin{pmatrix} 1-L  & 0 & 0\\ 0& 1 & 0\\ 0&0&1\\ \end{pmatrix}\left \{ \begin{pmatrix} d_{11} - d_{21} & d_{12}- d_{22}\\    (1-L)( d_{21}-d_{31}) & (1-L)(d_{22}-d_{32})\\  3 & 6\\ \end{pmatrix} \right. \\  &\left. + (1-L)  \mbf {\tilde G} (L) \right \}   \mbf u_t
   =\begin{pmatrix} 1-L  & 0 & 0\\ 0& 1 & 0\\ 0&0&1\\ \end{pmatrix} \mbf{\tilde M}(L) \mbf u_t.
  \end{aligned}$$     
 Assuming that the matrix
 $$\begin{pmatrix} d_{11} - d_{21} & d_{12 } - d_{22}  \\ 3 & 6\\ \end{pmatrix} $$
 is non-singular, the matrix $\mbf{\tilde M}(L)$ is zeroless and has therefore a finite-degree left inverse. Proceeding as in Proposition \ref{grtrr}, we obtain an alternative Error Correction representation with just one error term, namely $F_{1t}-F_{2t}$.     

This example can be generalized to show that generically $\mbf F_t$ admits Error Correction representations with a minimum $d$ and a maximum $r-q+d$ of error terms. In particular, if $d=0$, in addition to an Error Correction representation, $\mbf F_t$ generically has a finite-degree autoregressive representation with no error terms (i.e. a VAR), consistently with the fact that in this case $\mbf C(L)$ is generically zeroless.

 Experiments with simulated and  actual data suggest that the best results in estimation of singular VECMs  are obtained
 using $c$ (the maximum number of) error correction terms.

\subsection{The  left inverse of $\mathbf M(L)$  is not  necessarily unique}\label{unique3}
In the proof of Proposition \ref{grtrr} we have used the fact that generically the matrix $\mathbf M(L)$ has a finite-degree left inverse $\mathbf N(L)$. 
We now give some examples in which $\mathbf N(L)$ is not unique.
This is a   well known fact,   see also \cite{fornilippi10}, \cite{FHLZ2014}. 

Consider  
\begin{equation}\label{esempiuccio}(1-L) F_t = \begin{pmatrix} 1+aL\\1+bL\\ \end{pmatrix} u_t,\end{equation}
with $r=2$, $q=1$, $d=0$, $c=1$, with $a\neq b$.  In this case 
$\mbf A(L)$ is zeroless. An autoregressive representation 
can be obtained by elementary manipulations. Rewrite (\ref{esempiuccio}) as 
\begin{equation}\label{esempiuccio1} \begin{aligned} (1-L) F_{1t} &= u_t +a u_{t-1} \\
             (1-L)F_{2t}&= u_t+b u_{t-1}\\
             \end{aligned} 
\end{equation}
Taking $(b\ \  -a) \mbf C(L) \mbf u_t$,  we get
$$ u_t = \frac{\displaystyle b (1-L) F_{1t} -a(1-L) F_{2t}}{b-a}.$$
This can be used to get rid of $u_{t-1}$ in (\ref{esempiuccio1}) and obtain
\begin{equation}\left [ \mbf I_2 - \begin{pmatrix} \frac{\displaystyle ab}{\displaystyle b-a} & \frac{\displaystyle a^2 }{\displaystyle b-a} \\
 \frac{\displaystyle b^2}{\displaystyle b-a} & \frac{\displaystyle -ab }{\displaystyle b-a} \\   \end{pmatrix} L\right ](1-L) \mbf F_t=\begin{pmatrix}1 \\ 1\end{pmatrix} u_t,\label{projection}\end{equation}
which is an autoregressive representation in first differences. 

Model (\ref{esempiuccio1}), slightly modified, can be  used to illustrate non-uniqueness in the left inversion of $\mbf M(L)$. Consider
\begin{equation}\label{esempiuccio2} 
\begin{aligned} (1-L) F_{1t} &= u_t +a u_{t-1} \\
             (1-L)F_{2t}&= u_t+b u_{t-1}\\
             (1-L)F_{3t}&= u_t + c u_{t-1}.\\
\end{aligned} 
\end{equation}
Taking any vector $\mbf h =(h_1\ h_2\ h_3)$, orthogonal to $(a\ b\ c)$, we get rid of $u_{t-1}$ in (\ref{esempiuccio2}) and obtain an autoregressive representation in the differences. However, unlike in (\ref{esempiuccio1}), here the vectors $\mbf h $ span a $2$-dimensional space, thus producing an infinite set of autoregressive representations.

In the example just above non-uniqueness can also be seen as the consequence of the fact that the three stochastic variables $F_{j,t-1}$, $j=1,2,3$, are linearly dependent.  Therefore, projecting  each of the $F_{jt}$ onto the  space spanned by $F_{j,t-1}$, $j=1,2,3$, one would find a non-invertible covariance matrix, thus a unique projection of course but many representations of it as linear combinations of $F_{j,t-1}$, $j=1,2,3$.

 We do not address this problem systematically in the present paper. However, in the empirical analysis of \citet{BLLirf} we find no hint of singular covariance matrices.
\subsection{Uniqueness of impulse-response functions}\label{unique1}
Start with representation \eqref{wold1}
\begin{equation}\label{merkelmerkel}(1-L) \mathbf F_t= \mathbf S(L)^{-1} \mathbf C(L) \mathbf u_t=\mathbf U(L) \mathbf u_t=\mathbf  U_0\mathbf u_t +\mathbf U_1 \mathbf u_{t-1} +\cdots
=\mathbf  C(0)\mathbf u_t +\mathbf U_1 \mathbf u_{t-1} +\cdots
\end{equation} 
We assume that  $\mathbf u_t$ is fundamental for $(1-L) \mathbf F_t$, see Proposition \ref{montidc230}.
The impulse response function of $\mathbf F_t$ to 
$\mathbf u_t$ is 
$$ \mathbf H_j =\mathbf U_0+\cdots +\mathbf U_j,\ \ \ \ i=0,1,\ldots. $$
Now  suppose that $\mathbf F_t$ fulfills the autoregressive equation
\begin{equation}\label{laqualunque}\mathbf B(L) \mathbf F_t = (\mathbf I_r +\mathbf B_1 L+\ldots + \mathbf B_m L^m) \mathbf F_t=  \mathbf{\tilde  m} +\mathbf {\tilde R} \mathbf{\tilde  u}_t\end{equation}
where: (i)~$\mathbf{\tilde  R}$ is a full-rank  $r\times q$ matrix, (ii)~$\mathbf {\tilde u}_t$ is $q$-dimensional white noise, (iii)~$\mathbf {\tilde  u}_t$ is orthogonal 
to $(1-L) \mathbf F_\tau$, for $\tau\geq 0$. Applying $(1-L)$ to  both sides of (\ref{laqualunque}) we obtain
\begin{equation} \label{DAVIS21}\mathbf  B(L) \mathbf U(L) \mathbf u_t = (1-L) \mathbf {\tilde R} \mathbf {\tilde u}_t.\end{equation}

Assumption (i) and the argument mentioned in footnote \ref{DAVIS} imply that  $\mathbf{\tilde u}_t$ belongs to the space spanned by $\mathbf u_\tau$,  for $\tau\geq 0$, call it 
${\cal H}_{u,t}.$ Now consider the projection
$$ \mathbf {\tilde u}_t= \mathbf  G_0 \mathbf u_t +\mathbf G_1\mathbf u_{t-1}+\cdots $$
Multiplying both sides by $\mathbf u_{t-k}'$ and taking expected values:
$$ {\rm E} \mathbf {\tilde u}_t \mathbf  u_{t-k}'= \mathbf G_k \pmb \Gamma_u.$$
By assumption (iii),  $\mathbf {\tilde u}_t$ is orthogonal to ${\cal H}_{(1-L)F,t-k}$, for $k>0$,   which is equal  to ${\cal H}_{u,t-k}$, for $k>0$ (a 
consequence of the fundamentalness of $\mathbf u_t$ in \eqref{merkelmerkel}). Thus $\mathbf G_k=\mathbf 0$,  for $k>0$ and 
$$ \mathbf {\tilde u}_t = \mathbf G_0 \mathbf u_t,$$
where $\mathbf G_0$ is a non-singular $q\times q$ matrix.  Therefore, with no loss of generality, equation \eqref{laqualunque}  can be rewritten with 
$\mathbf R \mathbf u_t$ instead of $\mathbf {\tilde R}\mathbf{\tilde u}_t$ and \eqref{DAVIS21} becomes
$$\mathbf  B(L) \mathbf U(L) \mathbf u_t = (1-L) \mathbf { R} \mathbf { u}_t.$$
As $\mathbf u_t$ is  a non-singular $q$ dimensional white noise, this implies 
$$   \mathbf B(L) \mathbf U(L) = (1-L) \mathbf R,$$ 
so that:
$$\begin{aligned}& \mathbf U_0 =\mathbf R,\\ 
                       &\mathbf B_1 \mathbf R + \mathbf U_1 = -\mathbf R , \ \ \ \   \mathbf U_1 = -( \mathbf I_r+ \mathbf B_1) \mathbf R,\\
                      & \mathbf B_2 \mathbf R +\mathbf B_1\mathbf U_1 + \mathbf U_2 = \mathbf 0,\ \ \ \ \mathbf U_2 = (\mathbf B_1 + \mathbf B_1^2 -\mathbf B_2 )
                      \mathbf R,\\ &  \phantom{>}\vdots
                       \end{aligned}$$
and therefore
$$ \begin{aligned} &\mathbf H_0= \mathbf U_0 = \mathbf R,\\ &\mathbf H_1=\mathbf U_0 + \mathbf U_1 = -\mathbf B_1 \mathbf R,\\ &\mathbf H_2= \mathbf U_0+\mathbf U_1+\mathbf U_2 = \mathbf (\mathbf B_1^2 -\mathbf B_2)\mathbf R,\\ &\phantom{>}\vdots \end{aligned} $$
On the other hand, the impulse-response function implicit in \eqref{laqualunque} is given by the coefficient matrices of  $\mathbf K(L) \mathbf R$, where
$$\mathbf K(L) \mathbf B(L) = (\mathbf I_r + \mathbf K_1 L + \cdots ) \mathbf B(L) = \mathbf I_r.$$
It is easily seen that $\mathbf K_j \mathbf R=\mathbf H_j$.

Note that    we are not making assumptions  on $\mathbf B(1)$ in equation \eqref{laqualunque}.  When $d=0$,   
equation \eqref{laqualunque} can be  the autoregressive model in differences that results from left-inverting 
$\mathbf U(L)$ (no error correction term):
$$ \mathbf  {\tilde B}(L)
\mathbf   (1-L) \mathbf F_t =\mathbf B(L) \mathbf F_t= \mathbf C(0) \mathbf u_t.$$

  Replacing $\mbf u_t$ with any other white noise vector $\mbf w_t=\mbf Q \mbf u_t$, as we do when the shocks are identified according to  restrictions based  on economic theory, produces  different impulse-response functions that are however independent of the autoregressive representation of $\mathbf F_t$.

\section{Data Generating Process for the Simulations}\label{sec_dgp}

The simulation results of Section \ref{ECM}   are obtained using  the  following specification of  \eqref{faziocoglione}:
\beq\label{boschibona} \mathbf A(L) \mathbf F_t=
\mbf A^*(L)  (1-L) \mbf F_t + \bm \alpha \bm \beta' \mbf F_{t-1} =\mathbf C(0) \mathbf u_t= \mbf G \mbf H \mbf u_t,
\eeq
where $r=4$, $q=3$,  $c=3$, the degree of $\mathbf A(L)$ is $2$, so that
the degree of $\mathbf A^*(L)$ is $1$. 
$\mathbf A(L)$ is generated using  the   factorization
 $$\mbf A(L)=\mathbfcal{U}(L) \mathbfcal{M}(L) \mathbfcal{V}(L),$$
 where where $\mathbfcal{U}(L)$ and $\mathbfcal{V}(L)$ are $r\times r$  matrix  polynomials with all  their roots outside the unit circle, and 
$$\mathbfcal{M}(L)= \bpm (1-L)\mbf I_{r-c} & \mbf 0\\ \mbf 0 & \mbf I_c\epm$$
 \citep[see][]{watson94}.
To get a VAR(2)  we set $\mathbfcal{U}(L)=\mbf I_r-\mathbfcal{U}_1 L$, and $\mathbfcal{V}(L)=\mbf I_r$, and then, by rewriting $\mathbfcal{M}(L)=\mbf I_r- \mathbfcal{M}_1 L$, we get $ \mbf A_1= \mathbfcal{M}_1+\mathbfcal{U}_1$, and $ \mbf A_2=  -\mathbfcal{M}_1  \mathbfcal{U}_1$.

The  data  are then  generated as follows. The diagonal elements of the matrix $\mathbfcal{U}_1$ are drawn from a uniform distribution between $0.5$ and $0.8$, while the extra--diagonal elements from a uniform distribution between $0$ and $0.3$.   $\mathbfcal{U}_1$ is then standardized to ensure that its largest eigenvalue is $0.6$. The matrix $\mbf G$ is generated as in \citet{baing07}. Let $\tilde{\mbf G}$ be a $r \times r$ diagonal matrix of rank $q$ with non-zero entries $\tilde{g}_{ii}$ drawn from the uniform distribution between $0.8$ and $1.2$, and let $\check{\mbf G}$ be a random $r \times r$ orthogonal matrix. Then, $\mbf G$ is equal to the first $q$ columns of the matrix $\check{\mbf G} \tilde{\mbf G}^{1/2}$.   Lastly, the matrix $\mbf H$ is such that  the upper $3\times 3$ submatrix of  $\mathbf G\mathbf  H$ is lower triangular.

Results are based on 1000 replications. The matrices $\mathbfcal U_1$, $\mbf G$ and $\mbf H$ are simulated only once so that the set of impulse responses to be estimated is the same for all replications, whereas the vector $\mbf u_t$ is redrawn from ${\cal N}(\mathbf 0,\mathbf I_4)$ at each replication. 

\end{document}